\newcolumntype{N}[1]{>{\centering\arraybackslash}m{#1}}
\newcommand{\tpitchfork}{%
  \vbox{
    \baselineskip\z@skip
    \lineskip-.52ex
    \lineskiplimit\maxdimen
    \m@th
    \ialign{##\crcr\hidewidth\smash{$-$}\hidewidth\crcr$\pitchfork$\crcr}
  }%
}
\definecolor{refkey}{rgb}{1,0,0}
\definecolor{labelkey}{rgb}{1,0,0}
  \mathchardef\ordinarycolon\mathcode`\:
\newtheorem{thm}{Theorem}[section]
\newtheorem{lem}[thm]{Lemma}
\newtheorem{prop}[thm]{Proposition}
\begin{document}

\title{{\LARGE{Sturm attractors for \\ fully nonlinear parabolic equations}}}

\author{
 \\
{~}\\
Phillipo Lappicy*\\
\vspace{2cm}}

\date{ }
\maketitle
\thispagestyle{empty}

\vfill

$\ast$\\
Instituto de Ciências Matemáticas e de Computação, Universidade de S\~ao Paulo\\
Av. trabalhador são-carlense 400, 13566-590, São Carlos, SP, Brazil\\
$\ast$\\
Instituto Superior T\'ecnico, Universidade de Lisboa\\
Av. Rovisco Pais, 1049-001 Lisboa, Portugal\\

\newpage
\pagestyle{plain}
\pagenumbering{arabic}
\setcounter{page}{1}

\begin{abstract}
\noindent
We explicitly construct global attractors of fully nonlinear parabolic equations in one spatial dimension.  
These attractors are decomposed as equilibria (time independent solutions) and heteroclinic orbits (solutions that converge to distinct equilibria backwards and forwards in time).
In particular, we state necessary and sufficient conditions for the occurrence of heteroclinics between hyperbolic equilibria, which is accompanied by a method that computes such conditions. 

\ 

\noindent
\textbf{Keywords:} fully nonlinear PDEs, infinite dimensional dynamical systems.

\end{abstract}

\section{Main results}\label{sec:intro}

\numberwithin{equation}{section}
\numberwithin{figure}{section}
\numberwithin{table}{section}

Consider the scalar fully nonlinear parabolic differential equation
\begin{equation}\label{PDE}
    f(x,u,u_x,u_{xx},u_t)=0,
\end{equation}
with initial data $u(0,x)=u_0(x)$, where $x\in (0,\pi)$ has Neumann boundary conditions, $f \in C^2$. Indices abbreviate partial derivatives.
We assume the parabolicity condition 
\begin{equation}\label{par}     
    f_q\cdot f_r<0,
\end{equation}
for every argument $(x,u,p,q,r):=(x,u,u_x,u_{xx},u_t)$. In particular, $f_q,f_r\neq 0$.
To present the remaining hypothesis, we rewrite \eqref{PDE} in two different manners.

On one hand, we solve for the diffusion variable $q=u_{xx}$ in terms of the other variables $(x,u,u_x,u_t)$. Indeed, the parabolicity condition \eqref{par} implies $f_q\neq 0$, which allows us to rewrite \eqref{PDE} by the implicit function theorem as
\begin{equation}\label{diffIFT}
    u_{xx}=F(x,u,u_x,u_t),
\end{equation}
where the parabolicity condition \eqref{par} becomes $F_r>0$ at any $(x,u,p,r)=(x,u,u_x,u_t)$, since implicit differentiation implies $F_r=-f_r/f_q>0$. We note that $F$ may not, and need not be, defined globally. 
We only consider $F(x,u,p,.)$ to be defined on an open interval of $r$, with limits $\pm \infty$ of $F$ at the boundaries.

Then, we split the function $F$ into two parts: one independent of $u_t$, and the other depending on $u_t$. 
In other words, we distinguish between the diffusion part $F^0$ related to the equilibrium ODE, $u_t=0$; and the diffusion part $F^1$ related to time changing solutions. 
Specifically, to account for equilibria, we define
\begin{equation}\label{F0}
    F^0(x,u,p):=F(x,u,p,0),   
\end{equation}
and suppose it is well-defined. 
Otherwise, let $F^0\equiv0$, artificially.
To distinguish the $u_t$ dependence, define
\begin{equation}\label{F1}
    F^1(x,u,p,r):=
    \begin{cases}
        \frac{F(x,u,p,r)-F^0(x,u,p)}{r}\, &\text{ for } r\neq 0\\
        F_r(x,u,p,0)\, & \text{ for } r= 0.
    \end{cases}
\end{equation}
The parabolic equation \eqref{diffIFT} can be rewritten as 
\begin{equation}\label{diffIFTsplit}
    u_{xx}=F^0(x,u,u_x)+F^1(x,u,u_x,u_t)u_t\,.
\end{equation}
The parabolicity condition \eqref{par}, or $F_r>0$, now reads $F^1>0$. Indeed, the monotonicity condition $F_r>0$ ensures $F^1>0$ at $r=0$, as well as $\textrm{sign}(r)=\textrm{sign}(F-F^0)$ for $r\neq 0$. In the latter case, the numerator and denominator in \eqref{F1} have the same sign, yielding $F^1>0$ for all $r$.

On the other hand, we solve for the evolution variable $r=u_{t}$ in terms of the others $(x,u,u_x,u_{xx})$. Again, the implicit function theorem allows us to rewrite \eqref{PDE} as 
\begin{equation}\label{diffIFT2}
    u_t=\tilde{F}(x,u,u_x,u_{xx}).
\end{equation}
We split $F$ analogously to \eqref{diffIFTsplit}, i.e.,
\begin{equation}\label{diffIFT2split}
    u_t=\tilde{F}^0(x,u,u_x)+\tilde{F}^1(x,u,u_x,u_{xx})u_{xx}  
\end{equation}
where $\tilde{F}^0,\tilde{F}^1$ are defined similarly to \eqref{F0},\eqref{F1}.
The parabolicity condition is $\tilde{F}^1>0$. 



Therefore equation \eqref{PDE} can be disguised as \eqref{diffIFTsplit} or \eqref{diffIFT2split}, and we use each splitting whenever it is more convenient. For instance, the former splitting  \eqref{diffIFTsplit} is nonlinear in $u_t$, but linear in $u_{xx}$; 
whereas the latter \eqref{diffIFT2split} is linear in $u_t$, but nonlinear in $u_{xx}$.

The equation \eqref{diffIFT2split} defines a local \emph{semiflow} denoted by $(t,u_0)\mapsto u(t)$ in a Banach space $X^\alpha:=C^{2\alpha+\beta}([0,\pi])\cap \{\text{Neumann b.c.}\}$ for some $\alpha,\beta\in (0,1)$.
Consider $2\alpha+\beta>1$ so that solutions are at least $C^1$. The appropriate functional setting is described in Section \ref{sec:func}. 

We suppose that the semiflow is \emph{bounded} and \emph{dissipative}: trajectories $u(t)$ remain bounded, exist for all time $t\in \mathbb{R}_+$ and eventually enter a large ball in the phase-space $X^\alpha$. 
These hypotheses guarantee the existence of a nonempty \emph{global attractor} $\mathcal{A}\subseteq X^\alpha$ of \eqref{PDE}, which is the maximal compact invariant set. Equivalently, it is the minimal set that attracts all bounded sets of $X^\alpha$, or the set of all bounded trajectories $u(t)$ in $X^\alpha$ that exist for all $t\in \mathbb{R}$. See Figure \ref{figAtt}. The abstract setting is accurately described in the monumental work of Uraltseva, Ladyzhenskaya and Solonnikov \cite{Ladyzhenskaya68} for quasilinear equations, Henry \cite{Henry81} for semilinear equations, Hale, Magalhaes and Oliva \cite{HaleMagalhaesOliva84} for a general abstract framework, Lunardi \cite{Lunardi91} for fully nonlinear equations, Babin and Vishik \cite{BabinVishik92} for quasilinear equations.

To obtain boundedness and dissipativity of the semiflow,
we suppose that $F$ and $\tilde{F}$ can be globally solved yielding global descriptions of the PDE \eqref{PDE} as \eqref{diffIFTsplit} and \eqref{diffIFT2split}, and $\tilde{F}$ satisfies the following growth conditions: 
\begin{subequations}\label{diss}
\begin{align}
    \tilde{F}_q(x,u,p,q) & \geq \nu(|u|), &\text{ for all $(x,u,p,q)$,} \\ 
    \tilde{F}(x,u,0,0)\cdot u & \leq c \cdot [1+u^2], &\text{ for all $(x,u)$,}\\ 
    |\tilde{F}_x(x,u,p,0)| & \leq \mu_2(|u|)[1+|p|], &\text{ for all $(x,u,p)$,}\\
    \tilde{F}_u(x,u,p,0) & \leq \mu_1(|u|), &\text{ for all $(x,u,p)$,}\\
    \tilde{F}(0,0,p,0) \leq & \,\, 0 \leq \tilde{F}(1,0,p,0), & \text{ for all $|p|>K$ and some $K\in\mathbb{R}$,}
\end{align}
\end{subequations}
where $\nu$ is positive and continuously decreasing; and $\mu_1$ is continuously increasing. See Proposition 3.5 in \cite{Lunardi91}, or Chapter 6, Section 5 in \cite{Ladyzhenskaya68}. Also \cite{KruzhkovOleinek61, BabinVishik92}. 
We also need that the fractional power $\alpha>1/2$.
Note that the choice $\alpha\in (1/2,1)$ guarantees that $2\alpha+\beta>1$ for any $\beta\in (0,1)$.

\begin{figure}[H]
\centering
\begin{tikzpicture}[scale=1]
    \draw [fill=lightgray!50,domain=0:6.28,variable=\t,smooth] plot ({sin(\t r)},{0.7*cos(\t r)}) node[anchor= north east] {\footnotesize{$\mathcal{A}$}}; 
    
    \draw (-1.5,0) -- (1.5,0);
    \draw (0,-1.5) -- (0,1.5) node[right] {\footnotesize{$X^\alpha$}};
    
    \filldraw [black] (-0.75,-1.125) circle (1pt) node[anchor=north]{\footnotesize{$u_0$}};

    \draw [->,domain=-0.5:0.5,variable=\t,smooth] plot ({\t-0.25},{(\t)^3-1}) node[right] {\footnotesize{$u(t)$}}; 
\end{tikzpicture}
\caption{The semiflow $u(t)$ is a time $t\in\mathbb{R}_+$ action of the space $X^\alpha$ such that orbits are time-parametrized curves in $X^\alpha$ displaying the time evolution of an initial point $u_0\in X^\alpha$. Moreover, any initial data converges to the global attractor $\mathcal{A}$.}\label{figAtt}
\end{figure}
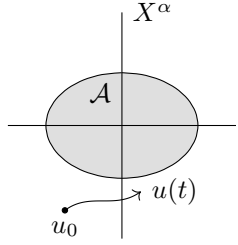

Next, we seek to decompose the attractor into smaller invariant sets, and describe how those sets are related within its internal dynamics. 
The attractor $\mathcal{A}\subseteq X^\alpha$ has gradient dynamics, due to the existence of a Lyapunov function in \cite{LappicyFiedler18}, which generalizes the construction of Matano \cite{Matano88} for quasilinear equations. See also Zelenyak \cite{Zelenyak68}.
More precisely, there is an energy functional $E:= \int_{0}^\pi L(x,u,u_x) dx$ that decays according to
\begin{equation}\label{BOOK}
        \frac{dE}{dt}:= -\int_{0}^\pi L_{pp}(x,u,u_x)F^1(x,u,u_x,u_t) |u_t|^2 dx\leq 0,
\end{equation}
for some Lagrange function $L$ satisfying the convexity condition $L_{pp}>0$, where $p:=u_x$, and the positive function $F^1>0$ is defined in \eqref{F1}. 
Therefore, the LaSalle invariance principle holds and implies that bounded solutions converge to equilibria, and any $\omega$-limit set consists of a single equilibrium, in case of hyperbolic equilibria. See \cite[Section 4.3]{Henry81} and \cite{Matano88}.
%

The global attractor is thereby decomposed as $\mathcal{A}=\mathcal{E}\cup \mathcal{H}$, where $\mathcal{E}$ denotes the set of equilibria (time independent solutions, i.e., $u_t=0$) and $\mathcal{H}$ stands for the set of heteroclinic orbits, i.e., a solution $u(t)\in \mathcal{H}$ that satisfies
\begin{equation}\label{defHETS}
        u_-\xleftarrow{t\to -\infty}u(t)\xrightarrow{t\to \infty} u_+, \qquad \text{ for some } u_\pm\in\mathcal{E}.
\end{equation}
The task of explicitly finding equilibria and which heteroclinics occur is often called the \emph{connection problem}.
In particular, necessary and sufficient conditions are given in order to guarantee the occurrence of heteroclinics among two given equilibria, such as in \eqref{defHETS}.
The global attractors of scalar unidimensional parabolic equations \eqref{PDE} are known as \emph{Sturm attractors}, since the connection problem can be solved by means of nodal properties firstly discovered by Sturm \cite{Sturm} for the autonomous linear case, which was further rediscovered (and notably generalized) by Matano \cite{Matano82} for the non-autonomous semilinear case. 
This characterization of Sturm attractors was carried out in the semilinear context with Hamiltonian reaction term $f(u)$ by Brunovský and Fiedler \cite{FiedlerBrunovsky89}, in the more general reaction term $f(x,u,u_x)$ by Fiedler and Rocha \cite{FiedlerRocha96}, and with periodic boundary conditions by Fiedler, Rocha and Wolfrum \cite{FRW,FRW12}. The author pursued quasilinear equations in \cite{Lappicy18} and singular diffusion in \cite{LappicySing}.
Some examples can be seen in \cite{Fiedler94}, and a classification of Sturm attractors of dimension 2 and 3 can be found in the respective triptychs \cite{FiRoPlanar1,FiRoPlanar2,FiRoPlanar3} and \cite{FiedlerRochatryp1,FiedlerRochatryp2,FiedlerRochatryp3}. 

Next, we establish the conjecture stated by Fiedler in \cite{Fiedler96} of constructing the Sturm attractors for fully nonlinear equations.

For the statement of the main theorem, we need a few definitions. 
An equilibrium $u_*(x)$ is \emph{hyperbolic} if the linearization of the right hand side of \eqref{PDE} at $u_*$ has no eigenvalue in the imaginary axis; also, the number of positive eigenvalues is called the \emph{Morse index}. 
The \emph{zero number} $z(u_*)$ denotes the number of strict sign changes of a continuous function $u_*(x)$, which is also defined in \eqref{zeronumberdef} for time-dependent solutions. 
Lastly, we say that two different equilibria $u_-,u_+$ of \eqref{PDE} are \emph{adjacent} if there does not exist an equilibrium $u_*$ of \eqref{PDE} such that $u_*(0)$ lies between $u_-(0)$ and $u_+(0)$, i.e. $u_-(0)<u_*(0)<u_+(0)$ or $u_-(0)>u_*(0)>u_+(0)$, and
\begin{equation}\label{adjintro}
    z(u_--u_*) = z(u_--u_+) = z(u_+-u_*).
\end{equation}
%
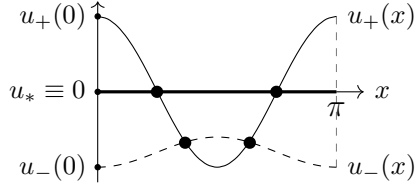
\begin{figure}[H]\centering
\begin{tikzpicture}[scale=1]
    \draw[->] (0,0) -- (3.5,0) node[right] {\footnotesize{$x$}};
    \draw[->] (0,-1.2) -- (0,1.2);
    \draw[color=gray,dashed] (3.14,1) -- (3.14,-1); \draw[color=gray,dashed] (3.14,0.01) -- (3.14,0) node[anchor=north ] {\color{black} $\pi$};

    \filldraw [black] (0,0) circle (1pt);
    \filldraw [black] (0,1) circle (1pt)node[left] {\footnotesize{$u_+(0)$}};
    \filldraw [black] (0,-1) circle (1pt)node[left] {\footnotesize{$u_-(0)$}};

    \draw [domain=0:3.14,variable=\t,smooth] plot ({\t},{cos(2*\t r)}) node[right] {\footnotesize{$u_+(x)$}};
    \draw [dashed,domain=0:3.14,variable=\t,smooth] plot ({\t},{-0.2*cos(2*\t r)-0.8}) node[right] {\footnotesize{$u_-(x)$}};
    \draw[very thick] (3.14,0) -- (0,0) node[left] {\footnotesize{$u_*\equiv 0$}};

    \emph{\filldraw [ultra thick] (0.78,0) circle (1.5pt);
        \filldraw [ultra thick] (2.35,0) circle (1.5pt);
        \filldraw [ultra thick] (1.15,-0.675) circle (1.5pt);
        \filldraw [ultra thick] (2,-0.675) circle (1.5pt);
        }
\end{tikzpicture}
\captionof{figure}{An example of continuous functions $u_\pm(x),u_*(x)$ such that $u_*$ is between $u_\pm$ at $x=0$, $z(u_--u_+) = z(u_+-u_*)=2$, but $z(u_--u_*)=0$. Note that the zero number of the difference of solutions counts the number of intersections of the two graphs. Thus, $u_-$ and $u_+$ are adjacent.}
\end{figure}
    
Both the zero number and Morse index can be computed from a permutation of the equilibria, called the \emph{Fusco-Rocha Permutation}, as it was done for the semilinear case in \cite{FuscoRocha,FiedlerRocha96}. 
We use the shooting method to explicitly find the equilibria and unravel the necessary information on adjacency by means of the Fusco-Rocha permutation for the fully nonlinear case in Section \ref{sec:perm}.
For such, it is required that the flow of the equilibria equation of \eqref{PDE} exists for all $x\in [0,\pi]$. 
\begin{thm}\emph{\textbf{Sturm Attractors for fully nonlinear PDEs.} } \label{attractorthmquasi}
Let $\alpha,\beta\in (0,1)$ such that $\alpha>1/2$.
Consider $f\in C^2$ satisfying the parabolicity condition \eqref{par} such that
$F$ and $\tilde{F}$ can be globally solved yielding the PDEs \eqref{diffIFTsplit} and \eqref{diffIFT2split}. Suppose that solutions of \eqref{diffIFT2split} generate a bounded and dissipative semiflow, which can be achieved with the sufficient conditions \eqref{diss}. Assume further that all equilibria for the equation \eqref{PDE} are hyperbolic. Then, 
\begin{enumerate}
    \item the global attractor $\mathcal{A}$ of \eqref{PDE} consists of finitely many equilibria $\mathcal{E}$, and heteroclinic connections $\mathcal{H}$ between them.
    \item there is a heteroclinic orbit $u(t)\in \mathcal{H}$ that converges to distinct equilibra $u_\pm\in\mathcal{E}$ as $t\to \pm\infty$, i.e.,
    \begin{equation}\label{hetmain}
        u_-\xleftarrow{t\to -\infty} u(t)\xrightarrow{t\to +\infty} u_{+}
    \end{equation}
    if, and only if, $u_-$ and $u_+$ are adjacent and $i(u_-)>i(u_+)$.
\end{enumerate}
\end{thm}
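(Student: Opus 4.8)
The plan is to show that, in spite of the fully nonlinear form of \eqref{PDE}, the global attractor carries exactly the Sturm structure that drives the semilinear and quasilinear theory, and then to import the combinatorial heart of the connection problem from the semilinear case \cite{FiedlerBrunovsky89, FiedlerRocha96}. For statement 1, boundedness and dissipativity of the semiflow on $X^\alpha$ give a nonempty compact global attractor $\mathcal{A}$; the energy $E$ together with the dissipation identity \eqref{BOOK}, the convexity $L_{pp}>0$ and the positivity $F^1>0$, combined with the LaSalle invariance principle, show that every complete bounded orbit is either an equilibrium or a heteroclinic \eqref{defHETS} between hyperbolic equilibria, so that $\mathcal{A}=\mathcal{E}\cup\mathcal{H}$. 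Since hyperbolic equilibria are isolated and $\mathcal{E}$ is compact, $\mathcal{E}$ is finite; each unstable manifold $W^u(u_*)$ is then a finite-dimensional embedded disk of dimension $i(u_*)$ and $\mathcal{H}=\bigcup_{u_*\in\mathcal{E}}\bigl(W^u(u_*)\setminus\{u_*\}\bigr)$.

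\textbf{The Sturm toolkit.} The difference $w=u^1-u^2$ of two solutions of \eqref{diffIFT2split} satisfies, by the mean value theorem applied to $f\in C^2$, a linear uniformly parabolic equation $w_t=a(t,x)w_{xx}+b(t,x)w_x+c(t,x)w$ with bounded continuous coefficients, uniform ellipticity coming from $\tilde F^1>0$; the same holds for the linearization along any solution. Hence Angenent's zero-number theorem applies: $z(w(t,\cdot))$ is finite for $t>0$, nonincreasing in $t$, and drops strictly at any multiple zero of $w(t,\cdot)$, in particular at any Neumann boundary zero. The equilibria solve the second order ODE $u_{xx}=F^0(x,u,u_x)$; hyperbolicity makes the Neumann-to-Neumann shooting curve transverse to its target, so there are finitely many equilibria and the Fusco--Rocha permutation $\sigma$ is well defined. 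As carried out in Section \ref{sec:perm}, $\sigma$ determines $i(u_j)$ and all $z(u_j-u_k)$ through the same formulas as in the semilinear case, since these depend only on the Sturm--Liouville structure of the linearization $A_* v=\tilde F_q(u_*)v_{xx}+\tilde F_p(u_*)v_x+\tilde F_u(u_*)v$. Since distinct solutions of the equilibrium ODE cannot be tangent, $u_i-u_j$ has only simple zeros whenever $u_i\neq u_j$ in $\mathcal{E}$, so all zero numbers appearing in \eqref{adjintro} are robust; and automatic transversality $W^u(u_-)\transv W^s(u_+)$, which in Henry \cite{Henry81} and Angenent is deduced purely from the nodal property of the linearized flow, carries over to the present setting verbatim.

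\textbf{Necessity.} Given a heteroclinic $u(t):u_-\to u_+$, transversality $W^u(u_-)\transv W^s(u_+)$ makes the connecting set of dimension $i(u_-)-i(u_+)\geq 1$, so $i(u_-)>i(u_+)$. For adjacency, suppose an equilibrium $u_*$ has $u_*(0)$ strictly between $u_-(0)$ and $u_+(0)$ with $z(u_--u_*)=z(u_--u_+)=z(u_+-u_*)=:k$. Because these limits are attained as $t\to\pm\infty$ (simple zeros), monotonicity of $t\mapsto z(u(t)-u_*)$ forces it to be constant equal to $k$; but $u(t,0)-u_*(0)$ changes sign, so $u(t_0,0)=u_*(0)$ for some $t_0$, and the Neumann condition gives $u_x(t_0,0)=u_{*,x}(0)$, i.e.\ $w=u-u_*$ has a multiple zero at $x=0$ at time $t_0$, whence $z$ drops strictly there --- a contradiction. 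Thus no blocking equilibrium exists and $u_-,u_+$ are adjacent; this is the blocking argument of \cite{FiedlerBrunovsky89, FiedlerRocha96}.

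\textbf{Sufficiency and the main obstacle.} The hard direction is to produce a heteroclinic \eqref{hetmain} when $u_-,u_+$ are adjacent with $i(u_-)>i(u_+)$. I would follow the shooting and Conley-index program of Brunovský--Fiedler and Fiedler--Rocha \cite{FiedlerBrunovsky89, FiedlerRocha96}: by induction on $n=i(u_-)$, using that after the LaSalle compactification the boundary $\partial W^u(u_-)$ of the $n$-disk $W^u(u_-)$ is a topological $(n-1)$-sphere built from equilibria and heteroclinics of strictly smaller Morse index, and that it is swept out in a manner controlled by the zero number along $W^u(u_-)$, which takes precisely the values $0,1,\dots,n-1$; a shooting and ordering argument within the Fusco--Rocha permutation then forces any $u_+$ adjacent to $u_-$ with smaller Morse index to lie in $\overline{W^u(u_-)}$, hence to be connected. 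The point that genuinely uses the fully nonlinear hypotheses is the regularity input that $W^u(u_-)$ is a $C^1$ cell with the stated cellular boundary, which rests on the smoothness of the semiflow generated by \eqref{diffIFT2split} in $X^\alpha$ (the fully nonlinear theory of \cite{Lunardi91}) combined again with the nodal property; everything combinatorial is then identical to the semilinear case. In short, the main obstacle is not a new dynamical phenomenon but the systematic verification that the semilinear toolkit --- zero-number dropping, Sturm--Liouville eigenfunction counts, automatic transversality, the sphere structure of unstable-manifold boundaries, and the permutation formulas --- survives for \eqref{PDE}, the crucial enabling fact being that differences and linearizations of solutions of a fully nonlinear scalar parabolic equation are still governed by linear scalar parabolic equations.
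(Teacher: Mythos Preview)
Your treatment of part 1 and of the necessity direction in part 2 is essentially the paper's: the Lyapunov function \eqref{BOOK} plus LaSalle give $\mathcal{A}=\mathcal{E}\cup\mathcal{H}$, hyperbolicity plus compactness give finiteness of $\mathcal{E}$, automatic transversality yields $i(u_-)>i(u_+)$, and the boundary-dropping argument you wrote is precisely the zero-number blocking of \cite{FiedlerRocha96}. No objection there.

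The gap is in the sufficiency direction. You propose to argue by induction on $i(u_-)$, using that the topological boundary $\partial W^u(u_-)$ of the compactified unstable cell is an $(n-1)$-sphere. The paper explicitly flags this as \emph{not established} in the fully nonlinear setting: after Proposition~\ref{hierarchyquasi} it notes that for fully nonlinear equations one only knows $W^u(u_*)$ is locally a ball, and that the Schoenflies-sphere result of \cite{FiedlerRocha15} is, in this generality, only conjectural. So your sufficiency argument rests on an input the paper deliberately avoids. Moreover, even in the semilinear case the sphere structure of $\partial W^u$ is a \emph{consequence} of the connection theorem, proved a posteriori in \cite{FiedlerRocha15,FiedlerRochaCW14}; it is not used as an ingredient in the original existence proof of heteroclinics in \cite{FiedlerRocha96}. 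Invoking it here is circular at worst and unproven at best.

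The paper's route is different and does not need any global topology of unstable manifolds. It decomposes the sufficiency into three pieces: the \emph{Cascading Principle} (Proposition~\ref{cascading}) reduces the problem to connections between equilibria with Morse indices differing by exactly one; the \emph{Liberalism Principle} (Proposition~\ref{estabhets2}) establishes such one-step connections by a Conley index computation --- one builds an explicit saddle-node model \eqref{prototype2}, transforms its equilibria onto $u_\pm$ via a zero-number-preserving homeomorphism $L$, and then homotopes the resulting equation to \eqref{diffIFT2split} while keeping $u_\pm$ hyperbolic, concluding $C(\Sigma)=[0]$ by homotopy invariance; finally \emph{Wolfrum's equivalence} (Proposition~\ref{wolfrumlemma}) identifies the cascade-free adjacency \eqref{adjintro} stated in the theorem with the ``cascadly adjacent'' notion that cascading plus liberalism actually produce. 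Your outline mentions Conley index in passing but omits all three of these structural steps; in particular, without cascading you would need to handle arbitrary Morse-index gaps directly, and without Wolfrum's equivalence you never connect the adjacency hypothesis \eqref{adjintro} to anything operational.
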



The remaining of the paper is organized as follows. 
We firstly introduce the necessary background in Section \ref{sec:func}, including the appropriate functional setting, invariant stable/unstable manifolds, the dropping lemma and two of its consequences (a comparison of zero numbers and Morse indices within invariant manifolds, and the Morse-Smale property). In Section \ref{sec:globalsturm}, we build upon the background tools in order to establish the connection problem. 
In Section \ref{sec:perm}, we describe the shooting method that unravel the information on adjacency (i.e., Morse indices and zero numbers), which is encoded in a permutation of the equilibria. 
In Section \ref{sec:CIfully}, we provide an example of fully nonlinear equations yielding the well known Chafee-Infante attractor. Lastly, we discuss the present results and future directions in Section \ref{sec:disc}.

\section{Proof of main result}

\subsection{Background} \label{sec:func}

The phase-space $X^\alpha$ lies in the space of H\"older continuous functions $X:=C^\beta([0,\pi])$ with H\"older coefficient $\beta\in (0,1)$, 
constructed as follows. See \cite{Lunardi95}, \cite{BabinVishik92}. The notation $C^{\beta}$ for some $\beta\in\mathbb{R}_+$ indicates that $\beta$ can be rewritten as $\lfloor\beta\rfloor+\{\beta\}$, where the integer part $\lfloor\beta\rfloor\in\mathbb{N}$ denotes the $\lfloor\beta\rfloor$-times differentiable functions whose $\lfloor\beta\rfloor$-derivatives is $\{\beta\}$-Hölder, where $\{\beta\}\in[0,1)$ is the fractional part of $\beta$.

Equation \eqref{diffIFT2} can be seen as an abstract differential equation on a Banach space,
\begin{equation}\label{linequiv}
    u_t=Au+g(u)    
\end{equation}
where $A:D(A)\rightarrow X$ is the linearization of the right-hand side of \eqref{diffIFT2} at the initial data $u_0$, and $g(u):=\tilde{F}(x,u,u_x,u_{xx})-Au$ is the Nemitskii operator taking values in $X$. The domain of $A$ is $D(A):=C^{2,\beta}([0,\pi])\cap \{\text{Neumann b.c.}\}\subseteq X$ for $\beta\in (0,1)$. Moreover, consider the interpolation spaces $X^\alpha:=C^{2\alpha+\beta}([0,\pi])$ between $X$ and $D(A)$, with $\alpha\in (0,1)$, and thus $A$ generates a strongly continuous semigroup in $X^\alpha$. 
Note that $X^\alpha\subseteq C^1([0,\pi])$ for $2\alpha+\beta>1$.
For a delicate analysis regarding the regularity theory for fully nonlinear parabolic equations, see \cite{Krylov, WangI, WangII, Crandall}, and more recently in \cite{CarafelliStefanelli,DongKrylov,ImbertSilvestre13}. 
Therefore, solutions of the equation \eqref{PDE} define a local semiflow in $X^\alpha$ according to the variation of constants formula. Moreover, this semiflow is dissipative for $\alpha>1/2$ and due to the conditions \eqref{diss} according to Theorem 8.1.1 in Lunardi \cite{Lunardi95}. Note that the closure of orbits $\{ u(t) \text{ $ | $ } t\in\mathbb{R}_+\}$ is compact in $X^\alpha$, due to the compact embedding of Hölder spaces $X^{\tilde{\alpha}}\subseteq\subseteq X^\alpha$ for $0<\alpha<\tilde{\alpha}<1$ and Theorem 3.3.6 in \cite{Henry85}.
%

In particular, this settles existence and uniqueness of solutions. For qualitative properties of solutions, such as the existence of invariant manifolds of a hyperbolic equilibrium $u_*\in\mathcal{E}$, one needs the spectrum of the linearization $A_*$ at $u_*$.
Note that $A_*u=\lambda u$ is a regular Sturm-Liouville problem, since the coefficients depend only on $x$ and are all bounded. Therefore, the spectrum of $A_*$ consists of non-zero real simple eigenvalues $\lambda_k$ accumulating at $-\infty$, and corresponding eigenfunctions $\phi_k(x)$ which form an orthonormal basis of $L^2$, and thereby of $X^\alpha\subseteq L^2$ by inheritance of the inner product. 
Thus we obtain the local behavior of \eqref{linequiv} nearby $u_*\in\mathcal{E}$, since there is a spectral gap between pairs of eigenvalues and $g(u)$ has small Lipschitz constant in a sufficiently small neighborhood of $u_*$ in which we can cut-off the nonlinearity outside such a neighborhood. 
\begin{prop} \label{hierarchyquasi}
    \emph{\textbf{Filtration of Invariant Manifolds.} \cite{Henry81, Angenent86, FiedlerBrunovsky86, Mielke91, BabinVishik92, Lunardi95}.}
    Let $u_*\in\mathcal{E}$ be a hyperbolic equilibrium of \eqref{PDE} with Morse index $n:=i(u_*)$. Then there exists a filtration of the \emph{unstable manifold}\footnote{Indices are not in agreement with the dimension of each submanifold, but with the number of zeros of the corresponding eigenfunction, e.g. $\phi_k$ has $k$ simple zeroes, whereas $\dim W^u_k=k+1$.} according to 
    \begin{equation}
        W^u_0(u_*)\subset ... \subset W^u_{n-1}(u_*)=W^u(u_*)
    \end{equation}
    where each $W^u_k$ is invariant and has dimension $k+1$ with tangent space at $u_*$ spanned by $\phi_0,...,\phi_{k}$. 
    Analogously, there is a filtration of the \emph{stable manifold}
    \begin{equation}
        ... \subset W^s_{n+1}(u_*)\subset W^s_{n}(u_*)=W^s(u_*)
    \end{equation}
    where each $W^s_k$ is invariant and has codimension $k$ with tangent space spanned by $\phi_{k},\phi_{k+1},...$ . Moreover, the following linear asymptotic behavior holds true:
    \begin{enumerate}
    \item Let $u(t)\in W^u_k(u_*)\backslash W^u_{k-1}(u_*)$ with $k=0,...,i(u_*)-1$, where $W^u_{-1}(u_*):=\emptyset$. Then,
        \begin{equation}
        \frac{u(t)-u_*}{||u(t)-u_*||}\xrightarrow{t\rightarrow -\infty} \pm \phi_k \qquad \text{ in } C^1.
        \end{equation}
    \item Let $u(t)$ in $W^s_k(u_*)\backslash W^s_{k+1}(u_*)$ with $k\geq i(u_*)$. Then,
        \begin{equation}
        \frac{u(t)-u_*}{||u(t)-u_*||}\xrightarrow{t\rightarrow \infty} \pm \phi_k, \qquad \text{ in } C^1.
        \end{equation}
    \end{enumerate}
\end{prop}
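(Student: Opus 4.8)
The plan is to localise the equation near $u_*$ and invoke the invariant manifold machinery for analytic semigroups, while keeping track of the full Sturm--Liouville spectrum to extract the nested filtration. First I would translate $u_*$ to the origin and write \eqref{diffIFT2} in the abstract form \eqref{linequiv}, $u_t = A_* u + g(u)$, where $A_*$ is the linearisation of $\tilde F(x,\cdot,\cdot,\cdot)$ at $u_*$ --- a regular Sturm--Liouville operator, since its coefficients depend on $x$ alone --- and $g\in C^1$ with $g(0)=0$ and $Dg(0)=0$; after a cutoff of $g$ outside a small neighbourhood of the origin we may assume $g$ is globally defined with arbitrarily small Lipschitz constant on the scale $X^\alpha$. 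By the Sturm--Liouville structure recalled above, $\mathrm{spec}(A_*)=\{\lambda_0>\lambda_1>\cdots\to-\infty\}$ consists of simple eigenvalues with eigenfunctions $\phi_k$ having exactly $k$ simple zeros, and hyperbolicity together with $n=i(u_*)$ gives $\lambda_0>\cdots>\lambda_{n-1}>0>\lambda_n>\cdots$. Simplicity of the eigenvalues provides a spectral gap between $\lambda_k$ and $\lambda_{k+1}$ at every level $k$.

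For each $k$ I would split $X^\alpha=X_k^+\oplus X_k^-$ with $X_k^+=\mathrm{span}\{\phi_0,\dots,\phi_k\}$ of dimension $k+1$ and $X_k^-$ the closed complementary spectral subspace, and record the exponential dichotomy estimates for the analytic semigroup $e^{A_*t}$ relative to this splitting in the interpolation norm of $X^\alpha$. Then the invariant manifold theorem for fully nonlinear parabolic equations (Lunardi \cite{Lunardi91, Lunardi95}; see also \cite{Henry81, Angenent86, FiedlerBrunovsky86, Mielke91, BabinVishik92}), applied at the gap $(\lambda_{k+1},\lambda_k)$, yields through the Lyapunov--Perron integral equation a locally invariant $C^1$ manifold $W_k^u(u_*)$, the graph of a $C^1$ map from a neighbourhood in $X_k^+$ to $X_k^-$ tangent to $X_k^+$ at $u_*$, characterised as the set of points admitting a backward orbit $u(t)$ with $e^{-\rho t}\lVert u(t)-u_*\rVert_{X^\alpha}\to 0$ as $t\to-\infty$ for $\rho\in(\lambda_{k+1},\lambda_k)$; for $k=n-1$ the relevant gap $(\lambda_n,\lambda_{n-1})$ straddles $0$ and this reproduces the full unstable manifold $W^u(u_*)=W_{n-1}^u$. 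Symmetrically, applying the theorem at the gaps $(\lambda_k,\lambda_{k-1})$ for $k\ge n$ produces the locally invariant $C^1$ manifolds $W_k^s(u_*)$ of codimension $k$, tangent to $\overline{\mathrm{span}}\{\phi_k,\phi_{k+1},\dots\}$ and characterised by forward decay faster than $e^{\rho t}$ with $\rho\in(\lambda_k,\lambda_{k-1})$; here $W^s(u_*)=W_n^s$. For the unstable side one may alternatively first reduce to the finite-dimensional $W^u(u_*)$ and build the sub-filtration inside it by finite-dimensional strong-unstable manifold theory.

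The filtration inclusions $W_0^u\subset\cdots\subset W_{n-1}^u$ and $\cdots\subset W_{n+1}^s\subset W_n^s$ then follow directly from these exponential characterisations, since a backward orbit whose decay rate exceeds $\lambda_{k+1}$ also has decay rate exceeding $\lambda_{k+2}$, and dually for the stable filtration. For the asymptotic statements, fix $u(t)\in W_k^u(u_*)\setminus W_{k-1}^u(u_*)$: tangency of $W_k^u$ to $X_k^+$ makes the $\phi_j$-components of $u(t)-u_*$ with $j>k$ higher order as $t\to-\infty$, while exclusion from $W_{k-1}^u$ forces the $\phi_k$-component to be nonzero and to decay backward strictly slower than the $\phi_j$-components with $j<k$, which are governed by the larger rates $\lambda_j>\lambda_k$. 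Hence the $\phi_k$-component dominates, and normalising by $\lVert u(t)-u_*\rVert$ and passing to the limit yields convergence to $\pm\phi_k$; since $2\alpha+\beta>1$ we have $X^\alpha\hookrightarrow C^1([0,\pi])$, so the convergence is in $C^1$. The argument for $u(t)\in W_k^s(u_*)\setminus W_{k+1}^s(u_*)$ as $t\to+\infty$ is identical, using forward rates and the smoothing of the analytic semigroup.

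The main obstacle is not the nesting or the asymptotics, which are soft once the manifolds are in place, but checking that the invariant manifold construction genuinely goes through in the fully nonlinear setting: one must verify that $f\in C^2$ renders $\tilde F$, and hence $g$, of class $C^1$ with vanishing differential at $u_*$ in the $X^\alpha$ scale, that $A_*$ generates an analytic semigroup satisfying the required dichotomy estimates in the Hölder interpolation spaces, and that the cutoff preserves these properties --- precisely the content of Lunardi's framework in \cite{Lunardi91, Lunardi95}. A secondary, more bookkeeping-type point is the regularity choice $\alpha>1/2$, which is what forces the embedding $X^\alpha\hookrightarrow C^1([0,\pi])$ used to upgrade all convergences to $C^1$.
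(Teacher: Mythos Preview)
Your proposal is correct and matches the paper's approach: the paper does not prove this proposition but cites it from \cite{Henry81, Angenent86, FiedlerBrunovsky86, Mielke91, BabinVishik92, Lunardi95}, offering only the one-line justification that there is a spectral gap between each pair of simple Sturm--Liouville eigenvalues and that $g$ can be cut off to have small Lipschitz constant near $u_*$. Your sketch fleshes out precisely this argument --- abstract reformulation \eqref{linequiv}, Sturm--Liouville spectrum, cutoff, Lyapunov--Perron at each spectral gap, exponential characterisation yielding the nesting, and dominant-mode analysis for the asymptotics --- and correctly identifies Lunardi's framework as the place where the fully nonlinear analytic-semigroup hypotheses are verified.
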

For the fully nonlinear case, the standard existence theorem of unstable manifolds only guarantees that $W^u(u_*)$ is locally diffeomorphic to a ball, see \cite[Theorem 9.1.4]{Lunardi95}. However, for the semilinear case, one can obtain global topological properties of unstable manifolds of equations \eqref{PDE}: the topological boundary of each $W^u(u_*)$ is not only homeomorphic to a sphere, but its interior is a ball, see \cite{FiedlerRocha15}. This excludes Alexander horned spheres and lens spaces.
Moreover, the unstable manifolds provide a cell decomposition of the global attractor $\mathcal{A}$ as a regular CW-complex, see \cite{FiedlerRochaCW14}. We believe these results still hold for the fully nonlinear case.

The conclusions of 1. and 2. regarding the asymptotic behavior in Proposition \ref{hierarchyquasi} also hold true by replacing the difference $u(t)-u_*$ with the tangent vector $u_t$.
Indeed, both $v:=u_t$ and $v:=u_1-u_2$, where $u_1,u_2$ are solutions of \eqref{diffIFT2}, satisfy a linear equation of the type
\begin{equation}\label{linPDE}
    v_t=a(t,x)v_{xx}+b(t,x)v_x+c(t,x)v
\end{equation}
where $x\in(0,\pi)$ has Neumann boundary conditions. The coefficients $a(t,x),b(t,x),c(t,x)$ are bounded for all $(t,x)\in \mathbb{R}\times [0,\pi]$ and given by
\begin{subequations}
\begin{align}
    {a}(t,x)&:=\int_1^2 \tilde{F}_q(x,u^s,u_x^s,u_{xx}^s )ds>0,\\
    {b}(t,x)&:=\int_1^2 \tilde{F}_p(x,u^s ,u_x^s,u_{xx}^s )ds,\\
    {c}(t,x)&:=\int_1^2 \tilde{F}_u(x,u^s ,u_x^s,u_{xx}^s )ds,   
\end{align}
\end{subequations}
where $q:=u_{xx},p:=u_x$, and $u^s :=(2-s) u_1 +(s-1) u_2 $ for $s\in[1,2]$. 
\begin{figure}[H]
\minipage{0.49\textwidth}\centering
   \begin{tikzpicture}[scale=1.8] 
    \draw [<<->>,very thick, domain=3.14:0,variable=\t,smooth] plot ({cos(\t r)},{sin(\t r)}) node[right] {\footnotesize{$W^u_0(u_*)$}};

    \draw (-1,0) arc (180:360:1cm and 0.2cm);
    \draw[dashed] (-1,0) arc (180:0:1cm and 0.2cm);

    \draw[rotate=-90,->] (-1,0) arc (180:281:1cm and 0.17cm);
    \draw[rotate=-90,dashed,->] (-1,0) arc (180:90:0.8cm and 0.17cm);
    
    \draw [domain=-0.05:1.6,variable=\t,smooth] plot ({0.3*\t-0.52},{0.415-0.58*(cos(2*\t r))}); 
    \draw [domain=0.35:0.8,variable=\t,smooth] plot ({0.5*\t-0.2},{0.8+0.99*(cos(4*\t r))}); 

    \draw[->,rotate=-40] (-0.28,-0.45) -- (-0.28,-0.451);
    \draw[->,rotate=40] (0.027,-0.27) -- (0.027,-0.271);

    \filldraw (-0.04,1) circle (1.4pt) node[anchor=south]{\footnotesize{$u_*$}};
    \end{tikzpicture}
\endminipage\hfill 
\minipage{0.49\textwidth}\centering
    \begin{tikzpicture}[scale=0.9]
    \draw[very thick,<<->>] (-1.5,0) -- (1.5,0) node[right] {\footnotesize{$\phi_0$}};
    \draw[<->] (0,-1.4) -- (0,1.4) node[above] {\footnotesize{$\phi_1$}};    
    
    \filldraw (0,0) circle (2.5pt) node[anchor=south west]{\footnotesize{$u_*$}};
    
    \draw[domain=-0.97:0.97,samples=100,<->] plot ({0.03+\x^2*tan((\x)r)},\x);    
    \draw[domain=-0.97:0.97,samples=100,<->] plot ({-0.03-\x^2*tan((\x)r)},\x);    
    \end{tikzpicture}
\endminipage
\caption{A two-dimensional unstable manifold, $W^u(u_*)=W^u_1(u_*)$, with tangent space spanned by $\phi_0,\phi_1$. On the left, the unstable manifold contains the (bold) one-dimensional curve, i.e. $W^u_0(u_*)$. Solutions in $W^u_0(u_*)$ (resp. $W^u_1(u_*)\backslash W^u_{0}(u_*)$) are well-approximated by $\phi_0$ (resp. $\phi_1$) as $t\to -\infty$. 
On the right, the linear semiflow in the tangent space 
that locally approximates the nonlinear semiflow. 
} \label{FIGunstable}
\end{figure}
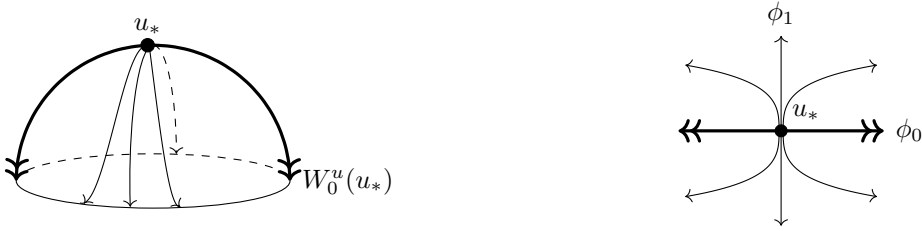

A fundamental ingredient in order to solve the connection problem are the nodal properties, i.e., the zero number of certain solutions of \eqref{PDE} is nonincreasing in time $t$, and decreases whenever a multiple zero occur.
A point $(t_*,x_*)\in\mathbb{R}\times [0,\pi]$ such that $u(t_*,x_*)=0$ is said to be 
a \emph{multiple zero} if $u_x(t_*,x_*)=0$.
Let the \emph{zero number} $0\leq z(u(t,.))\leq \infty$ count the number of strict sign changes in $x$ of a $C^1$ function $u(t,x)\not \equiv 0$, for each fixed $t$. More precisely, if $x\to u(t,x)$ is not of constant sign, let
\begin{equation}\label{zeronumberdef}
    z(u(t,.)):= \sup_k \left\{  
        \begin{array}{c} 
        \text{There is a partition $\{ x_j\}_{j=1}^{k}$ of } [0,\pi]\\
        \text{such that } u(t,x_j)u(t,x_{j+1})<0 \text{ for all } j=0,\cdots,k-1
        \end{array} \right\}.
\end{equation}
For functions which do not change sign, $x\mapsto u(t,x)\neq 0$, and we define $z(u):=0$. For the trivial constant, $x\mapsto u(t,x)\equiv 0$, we define $z(u\equiv 0):=-1$. 
\begin{lem} \label{droplemquasi}
    \emph{\textbf{Dropping Lemma.} \cite{Sturm,Matano82,Angenent88}.}
    Consider a non-trivial solution $v\in C^1$ of the linear equation \eqref{linPDE} for $t\in [0,T)$. Then, its zero number $z(v(t,.))$ satisfies
    \begin{enumerate}
    \item $z(v(t,.))<\infty$ for any $t\in (0,T)$.
    \item $z(v(t,.))$ is nonincreasing in time $t$.
    \item $z(v(t,.))$ decreases at multiple zeros $(t_*,x_*)$ of $v(t,.)$, i.e., 
    \begin{equation}
        z(v(t_*-\epsilon,.))>z(v(t_*+\epsilon,.)), \qquad \forall \text{ sufficiently small } \epsilon>0.
    \end{equation} 
    \end{enumerate}
\end{lem}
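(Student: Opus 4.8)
\begin{pf}[Proof proposal]
The whole statement reduces to understanding the local behaviour of $v$ near one of its zeros; the three assertions then follow by soft continuation arguments. The plan is to first establish, following Angenent \cite{Angenent88}, the \emph{local structure theorem}: if $v\not\equiv 0$ solves \eqref{linPDE} and $(t_0,x_0)\in(0,T)\times[0,\pi]$ satisfies $v(t_0,x_0)=0$, then there are an integer $n=n(t_0,x_0)\geq 1$ and a constant $c\neq 0$ with
\begin{equation*}
    v(t,x)=c\,P_n\!\big(x-x_0,\,t-t_0\big)+o\!\Big(\big(|x-x_0|+|t-t_0|^{1/2}\big)^{n}\Big)
\end{equation*}
as $(t,x)\to(t_0,x_0)$, where $P_n$ is the caloric polynomial of degree $n$ for the frozen operator $\partial_t-a(t_0,x_0)\partial_{xx}$. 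Restricting to $t=t_0$ gives $v(t_0,x)=c'(x-x_0)^n+o(|x-x_0|^n)$, so $n=1$ precisely at simple zeros and $n\geq 2$ precisely at multiple zeros.

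To prove the local structure theorem, freeze the coefficients at $(t_0,x_0)$ and rescale parabolically via $x=x_0+\lambda y$, $t=t_0+\lambda^2\sigma$; the rescaled functions $v_\lambda(y,\sigma):=v(t_0+\lambda^2\sigma,\,x_0+\lambda y)$ solve equations converging as $\lambda\to 0$ to the constant-coefficient equation $w_\sigma=a(t_0,x_0)w_{yy}$, since the drift, the zeroth-order term and the oscillation of $a$ all scale away. Interior parabolic $L^p$-- and Schauder estimates for \eqref{linPDE} --- available because $a,b,c$ inherit H\"older regularity from the bootstrapped interior regularity of the solution $u$ of \eqref{PDE}, where $f\in C^2$ is used --- give compactness of the normalized family $v_\lambda/\|v_\lambda\|$ over a fixed parabolic cylinder; any limit is a nonzero entire solution of the limiting equation with polynomial growth, hence, by a Liouville-type argument, a caloric polynomial $P_n$, and a unique-continuation step pins down a single order $n$ together with the sharp remainder. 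This is the analytic core of the lemma and the main obstacle; the rest is elementary.

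Granting the local structure theorem, one argues as follows. The caloric polynomial $P_n(\cdot,\sigma)$ has exactly $n$ simple real zeros for $\sigma<0$, an $n$-fold zero at the origin for $\sigma=0$, and at most one real zero for $\sigma>0$ (none if $n$ is even, one if $n$ is odd), by its Hermite-polynomial structure. Applying the expansion at a fixed $t_0\in(0,T)$ shows each zero of $v(t_0,\cdot)$ is isolated; by compactness of $[0,\pi]$ and non-triviality of $v$ (forward and backward uniqueness for \eqref{linPDE} rule out $v(t_0,\cdot)\equiv 0$ unless $v\equiv 0$), there are only finitely many --- assertion 1. At a simple zero $(t_0,x_0)$, where $v_x(t_0,x_0)\neq 0$, the implicit function theorem yields a $C^1$ curve $t\mapsto\xi(t)$ of simple zeros through it along which $v$ changes sign transversally, so no sign change is created or lost there. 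At a multiple zero of order $n\geq 2$, the expansion shows $v(t_0-\epsilon,\cdot)$ has exactly $n$ sign changes and $v(t_0+\epsilon,\cdot)$ at most one, inside a fixed neighbourhood of $x_0$, for all small $\epsilon>0$, while the sign pattern of $v$ elsewhere is frozen; hence $z(v(t_0-\epsilon,\cdot))-z(v(t_0+\epsilon,\cdot))\geq n-1\geq 1$ --- assertion 3. A standard continuation argument (the zero set of $v$ is locally a finite union of $C^1$ curves, so the times carrying a multiple zero are isolated, with $z$ locally constant in between and strictly decreasing across each) upgrades this to the global monotonicity of assertion 2.

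Finally, the Neumann conditions are handled by even reflection across $x=0$ and $x=\pi$, which turns $v$ into a $C^1$ solution of a parabolic equation of the same type on an open neighbourhood of $[0,\pi]$ (the reflected coefficients stay bounded with $a$ bounded below by a positive constant); a boundary zero, which under Neumann conditions is automatically a multiple zero because $v_x$ vanishes there, then becomes an interior zero of the extension, and the arguments above apply verbatim, sign changes on $[0,\pi]$ being counted as sign changes of the extension; cf. \cite{Matano82,Angenent88}.
\end{pf}
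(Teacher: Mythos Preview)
Your sketch is a faithful outline of Angenent's argument \cite{Angenent88} and is essentially correct; the caloric-polynomial expansion, the zero-counting for $P_n(\cdot,\sigma)$ across $\sigma=0$, and the even-reflection device for the Neumann boundary are all standard and sound. One small caveat: Angenent's original proof works under merely bounded measurable coefficients with $a$ uniformly positive, via similarity variables and a spectral argument, rather than through Schauder compactness of rescalings; your appeal to H\"older regularity of $a,b,c$ is legitimate in the present setting (the coefficients come from $C^2$ solutions of \eqref{PDE} via \eqref{diffIFT2}), but it is not strictly needed for the lemma as stated.

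By contrast, the paper does not give a proof at all: it simply records that both $v=u_t$ and $v=u_1-u_2$ satisfy the linear equation \eqref{linPDE} with bounded coefficients and positive leading term, and then invokes the classical semilinear result from \cite{Sturm,Matano82,Angenent88} verbatim. In other words, the paper's ``proof'' is a one-line reduction to the literature, whereas you have reproduced the substance of that literature. Your write-up is therefore strictly more informative than the paper's treatment, at the cost of importing analytic machinery (blow-up, Liouville, unique continuation) that the paper deliberately outsources.
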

Recall that both the tangent vector $v:=u_t$ and the difference $v:=u_1-u_2$ of solutions $u_1,u_2$ of the nonlinear equation \eqref{PDE} satisfy the linear equation \eqref{linPDE}, and thereby the proof of Lemma \ref{droplemquasi} for fully nonlinear equations is the same as in the semilinear case. 
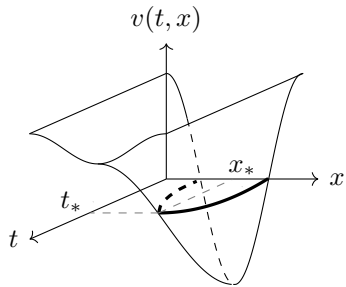
\begin{figure}[H]\centering
    \begin{tikzpicture}[scale=2]
    \draw[->] (0,0) -- (1,0) node[anchor=west]{\footnotesize{$x$}};
    \draw[->] (0,0) -- (0,0.9) node[anchor=south]{\footnotesize{$v(t,x)$}};    
    \draw[->] (0,0) -- (-0.9,-0.4) node[anchor=east]{\footnotesize{$t$}};

    \draw [domain=0:0.1375,variable=\t,smooth] plot ({\t},{0.7*(cos(7*\t r))});    
    \draw [dashed, domain=0.1375:0.44,variable=\t,smooth] plot ({\t},{0.7*(cos(7*\t r))});    
    \draw [domain=0.44:0.9,variable=\t,smooth] plot ({\t},{0.7*(cos(7*\t r))});    
    
    
    \draw [domain=0:0.9,variable=\t,smooth] plot ({-0.9+\t},{0.2+0.1*(cos(7*\t r))}); 
    \draw[-] (0,0.7) -- (-0.9,0.3);
    \draw[-] (0.9,0.7) -- (0,0.3);

    \draw [domain=0:0.44,variable=\t,smooth] plot ({-0.45+2*\t},{-0.3+0.4*(cos(7*\t r))});    

    \draw [very thick, domain=0:0.72,variable=\t,smooth] plot ({-0.05+\t},{-0.225+0.44*\t^2});    
    \draw [very thick, dashed, domain=0:0.25,variable=\t,smooth] plot ({-0.05+\t},{-0.225+0.42*\t^(1/2)});    

    \draw[color=gray,dashed] (-0.05,-0.225) -- (-0.5,-0.225);\filldraw (-0.48,-0.15) circle (0.001pt) node[left] {\footnotesize{$t_*$}};
    \draw[color=gray,dashed] (-0.05,-0.225) -- (0.44,0);\filldraw (0.5,-0.05) circle (0.001pt) node[above] {\footnotesize{$x_*$}};    
    \end{tikzpicture}
    \caption{Example of a function $v(t,x)$ with zeros denoted in bold. The number of zeros (in $x$) of $v(t,x)$ is two when $t=0$, and this number decreases with time, since $v(t,x)$ has no zeros for $t>t_*$. Moreover, the function $v(t_*,x)$ has a multiple zero at $x_*$, which is when the dropping occurs.}
\end{figure}
We can now combine the dropping lemma \ref{droplemquasi} and the asymptotic description in Proposition \ref{hierarchyquasi} in order to relate the zero number within invariant manifolds and the Morse indices of equilibria. 

\begin{prop} \label{Znuminvmfld}
    \emph{\textbf{Zero number within Invariant Manifolds.} \cite{FiedlerBrunovsky86}.}
    Consider a hyperbolic equilibrium $u_*\in\mathcal{E}$ with Morse index $i(u_*)$ and a solution $u(t)$ of \eqref{PDE}.
    \begin{enumerate}
    \item Let $u(t) \in W^u(u_*)$, then $i(u_*)>z(u(t)-u_*)$.
    \item Let $u(t) \in W^s_{loc}(u_*)\backslash \{u_*\}$, then $z(u(t)-u_*)\geq i(u_*)$.
    \end{enumerate}
    These results also hold by replacing $u(t)-u_*$ with the tangent vector $u_t$.
\end{prop}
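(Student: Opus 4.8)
The plan is to reduce both assertions to the linear parabolic equation \eqref{linPDE} and then play the monotonicity of the zero number (Dropping Lemma \ref{droplemquasi}) off against the linear asymptotics of Proposition \ref{hierarchyquasi}. I would first set $v(t,x):=u(t,x)-u_*(x)$; by the computation preceding \eqref{linPDE} (with $u_1=u$, $u_2=u_*$ in the coefficients $a,b,c$), this $v$ solves \eqref{linPDE}, and one may assume $v\not\equiv 0$, i.e. $u\not\equiv u_*$, since otherwise $z(v)=-1$ and both claims are trivial. Hence $z(v(t,\cdot))$ is finite for $t$ in the interior of the interval of existence and nonincreasing in $t$.

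The one nonroutine ingredient is an elementary statement about the eigenfunctions of the Sturm--Liouville operator $A_*$: each $\phi_k$ satisfies Neumann conditions and, by classical oscillation theory, has exactly $k$ zeros in $(0,\pi)$, all simple, with $\phi_k(0),\phi_k(\pi)\neq 0$ (were $\phi_k(0)=0$, then $\phi_k'(0)=0$ would force $\phi_k\equiv 0$). Consequently any $w\in C^1([0,\pi])$ sufficiently $C^1$-close to $\phi_k$ has exactly $k$ zeros, all simple and interior, hence exactly $k$ strict sign changes, so $z(w)=k$; this follows from the implicit function theorem near each simple zero of $\phi_k$ and from $|\phi_k|$ being bounded below off a neighbourhood of its (finite) zero set and at the endpoints.

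For item 1, take $u(t)\in W^u(u_*)=W^u_{n-1}(u_*)$ with $n:=i(u_*)$ and $u\not\equiv u_*$. By the filtration and the invariance of each $W^u_j$, the whole orbit lies in $W^u_k(u_*)\setminus W^u_{k-1}(u_*)$ for a unique $k\in\{0,\dots,n-1\}$, so the first asymptotic statement of Proposition \ref{hierarchyquasi} gives $v(t)/\|v(t)\|\to\pm\phi_k$ in $C^1$ as $t\to-\infty$. By the previous paragraph $z(v(t,\cdot))=k$ for all sufficiently negative $t$, and since $z(v(t,\cdot))$ is nonincreasing, $z(u(t)-u_*)=z(v(t,\cdot))\le k\le n-1<i(u_*)$ for every $t$, which is the claim. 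For item 2, take $u(t)\in W^s_{loc}(u_*)\setminus\{u_*\}$; again by the stable filtration and invariance the orbit lies in $W^s_k(u_*)\setminus W^s_{k+1}(u_*)$ for a unique $k\ge i(u_*)$, and the second asymptotic statement of Proposition \ref{hierarchyquasi} gives $v(t)/\|v(t)\|\to\pm\phi_k$ in $C^1$ as $t\to+\infty$, hence $z(v(t,\cdot))=k$ for all large $t$. Monotonicity of $z$ then yields $z(u(t)-u_*)=z(v(t,\cdot))\ge k\ge i(u_*)$ for all $t$ in the forward interval of existence. Finally, the versions with $u(t)-u_*$ replaced by $u_t$ are word for word identical: $u_t$ is also a (nontrivial, when $u$ is not an equilibrium) solution of \eqref{linPDE}, and by the remark following Proposition \ref{hierarchyquasi} the normalized $u_t$ converges to the \emph{same} eigenfunction $\pm\phi_k$ in the corresponding time limit.

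The only delicate point — really the sole place where care is needed — is the passage from $C^1$-convergence of the normalized solution to an \emph{equality} of zero numbers: one must rule out spurious zeros created both near the finitely many simple zeros of $\phi_k$ and near $x=0,\pi$. This is exactly where the Neumann boundary conditions (forcing $\phi_k(0),\phi_k(\pi)\neq 0$) and the simplicity of the Sturm--Liouville spectrum enter; everything else is bookkeeping with the filtration of Proposition \ref{hierarchyquasi} and the monotonicity of $z$ in Lemma \ref{droplemquasi}.
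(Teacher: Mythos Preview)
Your proposal is correct and follows exactly the approach the paper indicates: the paper does not spell out a proof but merely signals that Proposition~\ref{Znuminvmfld} is obtained by ``combin[ing] the dropping lemma \ref{droplemquasi} and the asymptotic description in Proposition \ref{hierarchyquasi}'' and cites \cite{FiedlerBrunovsky86}. You have faithfully executed that combination, including the Sturm--Liouville input that $\phi_k$ has $k$ simple interior zeros and nonvanishing boundary values, so that $C^1$-proximity transfers the zero count.
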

%
%
    

As a consequence of Proposition \ref{Znuminvmfld}, in case of hyperbolic equilibria, the semiflow of \eqref{PDE} is \emph{Morse-Smale}
, i.e., the non-wandering set 
consists of finitely many hyperbolic equilibria, and the stable and unstable manifolds of equilibria intersect transversely. 
\begin{thm} \label{MorseSmale}
    \emph{\textbf{Transversality.} \cite{Henry85,Angenent86,FuscoRocha}.}
    Consider hyperbolic equilibria $u_\pm \in\mathcal{E}$ with Morse indices $i(u_\pm)$. If $W^u(u_-)\cap W^s(u_+)\neq \emptyset$, then such intersection is transverse 
    and $W^u(u_-)\cap W^s(u_+)$ is an embedded submanifold of dimension $i(u_-)-i(u_+)$.
\end{thm}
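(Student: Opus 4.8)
The plan is to adapt the Henry--Angenent zero-number argument for transversality to the fully nonlinear setting, noting that the only ingredients special to \eqref{PDE} are already in hand: the tangent vector $u_t$ and any difference $u_1-u_2$ of solutions solve the scalar linear parabolic equation \eqref{linPDE} with bounded coefficients $a>0,b,c$; the filtration of invariant manifolds of Proposition \ref{hierarchyquasi} is available; and the dropping Lemma \ref{droplemquasi} applies both to \eqref{linPDE} and to its formal adjoint, which in the reversed time variable is again scalar linear parabolic with bounded coefficients. I would fix any $u^\ast\in W^u(u_-)\cap W^s(u_+)$ and let $u(t)$ be the full orbit through $u^\ast$, a heteroclinic with $u(t)\to u_\mp$ as $t\to\mp\infty$; it then suffices to prove transversality at $u^\ast$, since the condition $T_{u(t)}W^u(u_-)+T_{u(t)}W^s(u_+)=X^\alpha$ is carried along the orbit by the linearized flow.

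First I would record the Morse-index ordering forced by the mere existence of the connection. Applying Proposition \ref{Znuminvmfld}.1 to $u(t)-u_-\in W^u(u_-)$ and Proposition \ref{Znuminvmfld}.2 to $u(t)-u_+\in W^s_{loc}(u_+)\backslash\{u_+\}$ (for $t\gg0$), together with the monotonicity of the zero number from Lemma \ref{droplemquasi} and the fact that $u_+-u_-$ has only simple zeros (two distinct solutions of the second-order equilibrium ODE obtained by setting $u_t=0$ in \eqref{diffIFTsplit} cannot be mutually tangent, by uniqueness), one gets $z(u(t)-u_-)\to z(u_+-u_-)$ as $t\to+\infty$ and $z(u(t)-u_+)\to z(u_+-u_-)$ as $t\to-\infty$. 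Combined with the strict and non-strict inequalities of Proposition \ref{Znuminvmfld}, this yields $i(u_+)\le z(u_+-u_-)<i(u_-)$; in particular $i(u_-)>i(u_+)$. This step uses nothing about transversality.

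Next comes the core contradiction. Transversality at $u^\ast$ is equivalent, by the Fredholm alternative for the asymptotically hyperbolic linear equation \eqref{linPDE} posed on all of $\mathbb{R}$ (Henry \cite{Henry85}, Angenent \cite{Angenent86}, Fusco--Rocha \cite{FuscoRocha}), to the nonexistence of a nontrivial bounded solution $\psi(t)$, $t\in\mathbb{R}$, of the formal adjoint $-\psi_t=(a\psi)_{xx}-(b\psi)_x+c\psi$. Suppose such a $\psi$ exists. As $t\to\mp\infty$ this adjoint equation limits to the autonomous adjoints of the hyperbolic operators $A_\mp$ at $u_\mp$, so the spectral-gap analysis underlying Proposition \ref{hierarchyquasi} shows that $\psi(t)$, normalized, is asymptotic in $C^1$ to an eigenfunction of $A_-^{\ast}$ with zero number $m\ge i(u_-)$ as $t\to-\infty$ (the components along the unstable directions of $A_-$ would grow without bound backward, hence vanish asymptotically), and to an eigenfunction of $A_+^{\ast}$ with zero number $\ell\le i(u_+)-1$ as $t\to+\infty$. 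Thus $z(\psi(t,\cdot))=m$ for $t\ll0$ and $z(\psi(t,\cdot))=\ell$ for $t\gg0$. But in the reversed time $-t$ the adjoint is forward parabolic, so by Lemma \ref{droplemquasi} the map $t\mapsto z(\psi(t,\cdot))$ is nondecreasing, forcing $i(u_-)\le m\le\ell\le i(u_+)-1$ and contradicting $i(u_-)>i(u_+)$. Hence the intersection is transverse at $u^\ast$, and therefore everywhere on $W^u(u_-)\cap W^s(u_+)$.

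Finally, a transversal intersection of the finite-dimensional manifold $W^u(u_-)$ (dimension $i(u_-)$ by Proposition \ref{hierarchyquasi}) with the finite-codimension manifold $W^s(u_+)$ (codimension $i(u_+)$) is, by the transversality theorem for Banach manifolds, an embedded submanifold of dimension $i(u_-)-i(u_+)$, a nonnegative number, consistently with $u_t$ lying in its tangent space. The hard part will be the middle step under low regularity: for coefficients arising from a merely $C^2$ fully nonlinear $f$ one must run the adjoint equation and the dropping lemma in divergence (weak) form -- here Angenent's version of Lemma \ref{droplemquasi} for equations with bounded measurable coefficients \cite{Angenent88} suffices -- and transfer the exponential dichotomy near $u_\pm$ to the adjoint, which again rests only on the hyperbolicity of $A_\pm$ and the spectral gap already invoked for Proposition \ref{hierarchyquasi}.
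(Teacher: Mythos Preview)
The paper does not supply a proof of this theorem: it is stated as a cited result, with references \cite{Henry85,Angenent86,FuscoRocha}, and the text moves on immediately to its consequences (the transitivity principle for Morse--Smale systems). Your proposal is precisely the standard Henry--Angenent argument carried by those references, correctly transplanted to the fully nonlinear setting via the observation---already recorded in the paper before Lemma~\ref{droplemquasi}---that both $u_t$ and differences of solutions satisfy the linear equation \eqref{linPDE}. The Morse-index inequality $i(u_-)>i(u_+)$ from Proposition~\ref{Znuminvmfld}, the Fredholm reduction to a bounded adjoint solution, the asymptotic identification with adjoint eigenfunctions via the spectral gap, and the zero-number contradiction for the reversed-time adjoint are all in order; your sign bookkeeping on the monotonicity of $t\mapsto z(\psi(t,\cdot))$ is correct.

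The one point worth flagging is regularity of the adjoint coefficients: $a(t,x)=\tilde F_q(x,u,u_x,u_{xx})$ involves $u_{xx}$, so $(a\psi)_{xx}$ is delicate when $f$ is only $C^2$. You anticipate this by invoking Angenent's dropping lemma for divergence-form equations with bounded measurable coefficients \cite{Angenent88}, which is the right move; but note that the asymptotic convergence to adjoint eigenfunctions also needs care at this regularity, and one typically recasts the Fredholm/dichotomy argument in a weak (dual) formulation rather than the classical pointwise adjoint. This is a technical, not conceptual, gap, and is handled in the cited literature; the paper itself does not elaborate on it.
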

\begin{figure}[H]\centering
\begin{tikzpicture}[scale=1.9]
    \filldraw [black] (0,-1) circle (1pt);
    \filldraw [black] (3.14,-1) circle (1pt); 
    
    \filldraw [black] (1.57,-0.6) circle (1pt);    
    
    \draw [->,domain=0:0.3,variable=\t,smooth] plot ({\t},{-0.2*cos(2*\t r)-0.8});
    \draw [->,domain=0:-0.3,variable=\t,smooth] plot ({\t},{-0.2*cos(2*\t r)-0.8});
    
    \draw[<->] (0,-1.3) -- (0,-0.7) node[above] {\footnotesize{$W^u(u_-)$}};
    \draw (1.57,-0.9) -- (1.57,-0.3);

    \draw [domain=-0.3:1.57,variable=\t,smooth] plot ({\t},{-0.2*cos(2*\t r)-0.8-0.3});
    \draw [domain=-0.3:1.57,variable=\t,smooth] plot ({\t},{-0.2*cos(2*\t r)-0.8+0.3});    
    
    \draw [dashed,->,domain=3:3.06,variable=\t,smooth] plot ({\t},{-0.2*cos(2*\t r)-0.8});

    \draw[dashed,shift={(1.58,-0.6)},rotate=114] (0,-0.26) -- (0,0.26); 

    \draw [dashed,domain=1.66:3.14,variable=\t,smooth] plot ({\t+0.2},{-0.2*cos(2*\t r)-0.8-0.1+0.2})   node[anchor= south] {\footnotesize{$W^s(u_+)$}}; 

    \draw [dashed,domain=3.14:1.66,variable=\t,smooth] plot ({\t-0.3},{-0.2*cos(2*\t r)-0.8-0.1}); 
    \draw [dashed,domain=3.44:3.09,variable=\t,smooth] plot ({\t-0.2},{0.2*cos(2*\t r)-0.8-0.5});  
    
    \draw [->,dashed,domain=3.35:3.19,variable=\t,smooth] plot ({\t},{0.2*cos(2*\t r)-0.8-0.4});  
    
    \draw[->] (3.05,-1.1) -- (3.1,-1.04);
    \draw[<-,shift={(0.11,0.15)}] (3.05,-1.1) -- (3.1,-1.04);
    
    \draw [domain=0:1.57,variable=\t,smooth] plot ({\t},{-0.2*cos(2*\t r)-0.8});
    \draw [dashed,domain=1.57:3.14,variable=\t,smooth] plot ({\t},{-0.2*cos(2*\t r)-0.8});    

\end{tikzpicture}
\captionof{figure}{An example of a transverse heteroclinic orbit connecting two hyperbolic equilibria: from $u_-$ to $u_+$ as $t\in\mathbb{R}$ increases. The heteroclinic occurs as the intersection of $W^u(u_-)$ and $W^s(u_+)$.}
\end{figure}
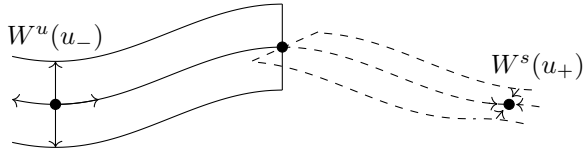
In particular, Morse-Smale systems satisfy the following transitivity principle: if there is a heteroclinic orbit from $u_-$ to $u_*$ and another from $u_*$ to $u_+$, then there is also a heteroclinic from $u_-$ to $u_+$. 
This will be the main feature of Morse-Smale systems which will be used in the upcoming section.

Note that the Morse-Smale property remains true in certain cases the equilibria are not hyperbolic, as in \cite{Henry85}; or higher spatial dimension, generically, see \cite{BrunovskyPolacik97}. 
We emphasize that genericity of both hyperbolicity and the Morse-Smale property have been proved for scalar unidimensional semilinear equations, which respectively imply the local and global stability of the dynamics with respect to perturbations of the system. See \cite{BrunovskyChow84,HaleMagalhaesOliva84,Lu94,Oliva02,BrunovskyJolyRaugel19}. These genericity results should remain true for fully nonlinear equations. 

\subsection{Sturm global structure}\label{sec:globalsturm}

This section 
abstractly constructs the attractor for the equation \eqref{PDE} and proves the second part of Theorem \ref{attractorthmquasi}.
Its proof is a consequence of four propositions. First, the \emph{cascading principle} guarantees it is enough to construct only heteroclinics between equilibria with Morse index differing by 1. Second, on one direction, the \emph{blocking principle}: certain conditions prevent the existence of a heteroclinic; whereas on the other direction, the \emph{liberalism principle}: if heteroclinics are not forbidden, then they actually exist.
Lastly, \emph{Wolfrum's equivalence} yield a relation between two notions of adjacencies: one that depends on a cascade between equilibria, and one that does not.

\begin{prop}\emph{\textbf{Cascading Principle.} \cite{FiedlerRocha96}}. \label{cascading}
Consider two equilibria $u_\pm$ of equation \eqref{PDE} such that $n := i (u_-) - i(u_+) > 0$. Then the following statements are equivalent:
\begin{itemize}
\item[(i)] There exists a heteroclinic orbit from $u_-$ to $u_+$ in forward time, as in \eqref{hetmain}.
\item[(ii)] There exists a sequence (cascade) of equilibria $\{ e_j\}_{j=0}^n$ with $e_n:=u_-$ and $e_0:=u_+$ such that $i(e_{j+1})=i(e_j)+1$ and there exists a heteroclinic orbit from $e_{j+1}$ to $e_j$ in forward time for each $j=0,...,n-1$.
\end{itemize}
\end{prop}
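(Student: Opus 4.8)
The plan is to prove the Cascading Principle by reducing it to the Morse-Smale transitivity that has already been established in Theorem~\ref{MorseSmale}, and to the linear asymptotic behavior and zero-number control from Propositions~\ref{hierarchyquasi} and~\ref{Znuminvmfld}. The direction (ii)$\Rightarrow$(i) is the easy one: given a cascade $e_n = u_- , e_{n-1}, \dots, e_0 = u_+$ with consecutive heteroclinics, the Morse-Smale transitivity principle stated just after Theorem~\ref{MorseSmale} lets one concatenate the heteroclinic from $e_n$ to $e_{n-1}$ with the one from $e_{n-1}$ to $e_{n-2}$, obtaining a heteroclinic from $e_n$ to $e_{n-2}$; iterating $n-1$ times yields a heteroclinic from $u_-$ to $u_+$. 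I would phrase this as a finite induction on $n$, the base case $n=1$ being trivial.

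The substantive direction is (i)$\Rightarrow$(ii). Suppose $u(t)$ is a heteroclinic from $u_-$ to $u_+$, so $u(t)\in W^u(u_-)\cap W^s(u_+)$. First I would record the zero number of the heteroclinic: by the dropping lemma applied to $v(t,\cdot):=u(t,\cdot)-u_+$ together with Proposition~\ref{Znuminvmfld}, the quantity $z(u(t)-u_+)$ is a well-defined constant $q$ along the orbit with $i(u_+)\le q$ on the stable side and $q < i(u_-)$ on the unstable side, so $i(u_+)\le q \le i(u_-)-1$; an analogous statement holds for $z(u(t)-u_-)$. If $n=1$ there is nothing to prove, so assume $n=i(u_-)-i(u_+)\ge 2$. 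The goal is then to produce an intermediate equilibrium $e$, adjacent in Morse index to one of the endpoints, lying on the closure of $W^u(u_-)\cap W^s(u_+)$, from which one can split the heteroclinic into two shorter ones and induct on $n$. The mechanism is the standard one from \cite{FiedlerRocha96}: the set $W^u(u_-)\cap W^s(u_+)$ is, by Theorem~\ref{MorseSmale}, an embedded submanifold of dimension $n=i(u_-)-i(u_+)$, its closure is compact (it sits inside the compact attractor $\mathcal{A}$) and, being invariant, its boundary is a union of heteroclinic orbits between equilibria with strictly smaller Morse-index gap; choosing a boundary orbit and the equilibria it limits onto, one obtains two cascading heteroclinics from $u_-$ to some $e$ and from $e$ to $u_+$ with $i(u_-)>i(e)>i(u_+)$, to each of which the induction hypothesis applies, and splicing the resulting cascades gives the full cascade from $u_-$ to $u_+$.

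The main obstacle, and the step that needs the most care, is justifying that the boundary of the embedded submanifold $W^u(u_-)\cap W^s(u_+)$ is nonempty and decomposes into lower-gap heteroclinic orbits in the fully nonlinear setting. Nonemptiness follows from compactness together with the fact that an $n$-dimensional ($n\ge 1$) connected invariant manifold inside the attractor cannot be closed unless it is all of $\mathcal{A}$, which it is not since $u_\pm$ are distinct hyperbolic equilibria; invariance of the closure forces the boundary to consist of entire bounded orbits, and the gradient structure \eqref{BOOK} together with LaSalle forces each such orbit to be a heteroclinic between equilibria, whose Morse-index gap is at most $n-1$ because it lies in the topological boundary of an $n$-dimensional manifold — here one uses that along a boundary heteroclinic the zero number $z(\cdot-u_+)$ equals $q$ (by continuity of the zero number on the closure away from equilibria, via the dropping lemma) so the Morse indices of the intermediate equilibria are squeezed strictly between $i(u_+)$ and $i(u_-)$. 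All of these ingredients are either stated in the excerpt (compactness of $\mathcal{A}$, Morse-Smale, dropping lemma, zero-number/Morse-index inequalities, gradient structure) or transcribe verbatim from the semilinear proof in \cite{FiedlerRocha96}, since the only PDE-specific inputs — the dropping lemma and transversality — have already been extended to fully nonlinear equations in Lemma~\ref{droplemquasi} and Theorem~\ref{MorseSmale}.
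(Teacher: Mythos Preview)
Your treatment of (ii)$\Rightarrow$(i) via Morse--Smale transitivity is correct and matches the paper.

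For (i)$\Rightarrow$(ii), however, the sketch you give has a genuine gap. You argue that the topological boundary of the connecting manifold $W^u(u_-)\cap W^s(u_+)$ must contain a heteroclinic through some intermediate equilibrium $e$ with $i(u_+)<i(e)<i(u_-)$, but nothing in your outline rules out the possibility that this boundary is just $\{u_-,u_+\}$. For a concrete obstruction, take the height function on $S^2$ as a Morse--Smale gradient system: the maximum has index $2$, the minimum has index $0$, the connecting manifold is the open cylinder $S^2\setminus\{\mathrm{max},\mathrm{min}\}$, and its boundary is exactly the two poles --- no index-$1$ critical point exists, so (i) holds while (ii) fails. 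This is precisely why the paper stresses that (i)$\Rightarrow$(ii) ``is not true in general and is specific of equation~\eqref{PDE}''. Your invocation of the zero number at the end only constrains the Morse indices of intermediate equilibria \emph{assuming they exist}; it does not produce them.

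The argument in \cite[Lemma~1.5]{FiedlerRocha96}, to which the paper simply defers, uses the nodal properties in a more essential way to manufacture the intermediate equilibrium: one exploits the filtration $W^u_0(u_-)\subset\cdots\subset W^u_{i(u_-)-1}(u_-)$ of Proposition~\ref{hierarchyquasi} together with the fact (Proposition~\ref{Znuminvmfld}) that orbits in different strata carry prescribed values of $z(\,\cdot\,-u_-)$, and it is this Sturm-specific stratification --- absent in a general Morse--Smale system --- that forces a suitable $e$ to appear. You are right that once Lemma~\ref{droplemquasi} and Proposition~\ref{Znuminvmfld} are available the semilinear proof transcribes verbatim to the fully nonlinear case; but the mechanism you describe is not that proof, and cannot succeed without the zero-number input at the existence step.
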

The proof of Proposition \ref{cascading} relies on nodal properties, and we refer to \cite[Lema 1.5]{FiedlerRocha96}. The implication $(ii) \rightarrow (i)$ is a special case of a transitivity principle that holds for Morse-Smale systems, due to Theorem \ref{MorseSmale}, whereas the converse implication $(i) \rightarrow (ii)$ is not true in general and is specific of equation \eqref{PDE}.
Due to the \emph{cascading principle}, it suffices to construct all heteroclinic orbits between equilibria with Morse indices differing by one.
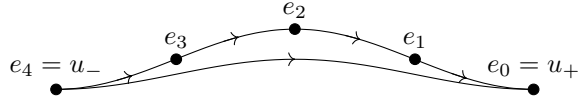
\begin{figure}[H]\centering
\begin{tikzpicture}[scale=2]
    \filldraw [black] (0,-1) circle (1pt) node[anchor=south] {\footnotesize{$e_4=u_-$}};
    \filldraw [black] (3.14,-1) circle (1pt) node[anchor=south] {\footnotesize{$e_0=u_+$}};
    
    \filldraw [black] (1.57,-0.6) circle (1pt) node[anchor=south] {\footnotesize{$e_2$}};    
    \filldraw [black] (0.79,-0.8) circle (1pt) node[anchor=south] {\footnotesize{$e_3$}};    
    \filldraw [black] (2.36,-0.8) circle (1pt) node[anchor=south] {\footnotesize{$e_1$}};        
    
    \draw [domain=0:3.14,variable=\t,smooth] plot ({\t},{-0.2*cos(2*\t r)-0.8});
    
    \draw [->,domain=0.4:0.5,variable=\t,smooth] plot ({\t},{-0.2*cos(2*\t r)-0.8});
    \draw [->,domain=1.1:1.2,variable=\t,smooth] plot ({\t},{-0.2*cos(2*\t r)-0.8});
    \draw [->,domain=1.9:2,variable=\t,smooth] plot ({\t},{-0.2*cos(2*\t r)-0.8});    
    \draw [->,domain=2.6:2.7,variable=\t,smooth] plot ({\t},{-0.2*cos(2*\t r)-0.8});

    \draw [domain=0:3.14,variable=\t,smooth] plot ({\t},{-0.1*cos(2*\t r)-0.9});
    \draw [->,domain=1.56:1.57,variable=\t,smooth] plot ({\t},{-0.1*cos(2*\t r)-0.9});    
\end{tikzpicture}
\captionof{figure}{A heteroclinic from $u_-$ to $u_+$ exists if, and only if there is a cascade of consecutive heteroclinics among equilibria $\{e_j\}_{j=0}^4$ with Morse index differing by 1 such that $e_4=u_-$ and $e_0=u_+$.
}
\end{figure}
The second result provides a condition that prevents heteroclinic orbits between equilibria with Morse indices differing by one. Before we present its content, we say that two hyperbolic equilibria $e_{j+1}$ and $e_j$ of \eqref{PDE} with $i(e_{j+1})=i(e_j)+1$ are \emph{blocked} if one of the following conditions holds:
 \begin{enumerate}
     \item \emph{Morse blocking:} $z(e_{j+1}-e_j)\neq i(e_j)$;
     \item \emph{Zero number blocking:} there exists an equilibria $u_*$ between $e_{j+1}$ and $e_j$ at $x=0$, i.e. $e_j(0)<u_*(0)<e_{j+1}(0)$ or $e_{j}(0)>u_*(0)>e_{j+1}(0)$, such that 
     \begin{equation}\label{zeronumberblock}
         z(e_{j+1}-u_*)=z(e_{j+1}-e_j)=z(e_j-u_*).
     \end{equation}
\end{enumerate} 
The proof of the upcoming Proposition \ref{estabhets} follows from nodal properties of solutions of \eqref{PDE}; see the subsequent discussion of Definition 1.6 in \cite{FiedlerRocha96}.
\begin{prop} \emph{\textbf{Blocking Principle.} \cite{FiedlerRocha96}.} \label{estabhets}
Consider two equilibria $e_{j+1}$ and $e_j$ of \eqref{PDE} such that $i(e_{j+1})=i(e_j)+1$. If $e_{j+1}$ and $e_j$ are blocked, then there does not exist heteroclinic orbits from $e_{j+1}$ to $e_j$ in forward time.
\end{prop}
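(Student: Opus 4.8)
The plan is to prove the contrapositive form in the two natural cases separately, using the dropping lemma (Lemma \ref{droplemquasi}) and the zero-number comparisons within invariant manifolds (Proposition \ref{Znuminvmfld}) as the only analytic inputs. Suppose, for contradiction, that there is a heteroclinic orbit $u(t)$ with $u(t)\to e_{j+1}$ as $t\to-\infty$ and $u(t)\to e_j$ as $t\to+\infty$, while $e_{j+1}$ and $e_j$ are blocked. Along this orbit consider the difference $v(t,\cdot):=u(t,\cdot)-e_j(\cdot)$ and also $w(t,\cdot):=u(t,\cdot)-e_{j+1}(\cdot)$; both solve a linear equation of the form \eqref{linPDE} with bounded coefficients, hence the dropping lemma applies to each. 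Because $u(t)$ lies in $W^u(e_{j+1})$ and, near $t=+\infty$, enters $W^s_{\mathrm{loc}}(e_j)\setminus\{e_j\}$, Proposition \ref{Znuminvmfld} gives $z(w(t,\cdot))\le i(e_{j+1})-1=i(e_j)$ for $t\to-\infty$ and $z(v(t,\cdot))\ge i(e_j)$ for $t\to+\infty$. Since $v(t,\cdot)\to e_{j+1}-e_j$ as $t\to-\infty$ and this limit has finitely many, all simple, zeros (hyperbolicity of the equilibria and the fact that $e_{j+1}-e_j$ solves a second-order ODE with only simple zeros, since a double zero would force $e_{j+1}\equiv e_j$), continuity of the zero number under $C^1$-convergence at a function with only simple zeros gives $z(v(t,\cdot))=z(e_{j+1}-e_j)$ for all $t$ sufficiently negative. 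The dropping lemma then yields the chain of inequalities
\begin{equation}
i(e_j)\le z(v(+\infty,\cdot))\le \cdots \le z(v(-\infty,\cdot))=z(e_{j+1}-e_j),
\end{equation}
and symmetrically, from the $w$-analysis and $z(w(t,\cdot))=z(e_{j+1}-e_j)$ for $t$ large positive (again by $C^1$-convergence to a function with simple zeros), one obtains $z(e_{j+1}-e_j)\le i(e_{j+1})-1=i(e_j)$. Hence $z(e_{j+1}-e_j)=i(e_j)$; this contradicts Morse blocking, which disposes of Case 1.

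For Case 2 (zero-number blocking), suppose there is an equilibrium $u_*$ with $u_*(0)$ strictly between $e_j(0)$ and $e_{j+1}(0)$ and $z(e_{j+1}-u_*)=z(e_{j+1}-e_j)=z(e_j-u_*)=:k$, and again suppose a heteroclinic $u(t)$ from $e_{j+1}$ to $e_j$ exists. Introduce $y(t,\cdot):=u(t,\cdot)-u_*(\cdot)$, which also solves a linear equation of type \eqref{linPDE}, so the dropping lemma applies. By the $C^1$-asymptotics of Proposition \ref{hierarchyquasi}, as $t\to-\infty$ the function $y(t,\cdot)$ is $C^1$-close to $e_{j+1}-u_*$ (which has only simple zeros), so $z(y(t,\cdot))=k$ for $t$ very negative; and as $t\to+\infty$, $y(t,\cdot)$ is $C^1$-close to $e_j-u_*$, so $z(y(t,\cdot))=k$ for $t$ very large. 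The monotonicity part of the dropping lemma then forces $z(y(t,\cdot))\equiv k$ for all $t$, and the strict-drop part forces that $y(t,\cdot)=u(t,\cdot)-u_*(\cdot)$ never has a multiple zero. The contradiction is obtained by tracking the values at $x=0$: since $u_*(0)$ lies strictly between the two endpoint values $e_{j+1}(0)$ and $e_j(0)$, the continuous function $t\mapsto u(t,0)-u_*(0)$ changes sign along the orbit, hence vanishes at some $t_0$; at that instant $x=0$ is a zero of $y(t_0,\cdot)$, and because $x=0$ is a Neumann boundary point one has $u_x(t_0,0)=(u_*)_x(0)=0$, so $y_x(t_0,0)=0$ as well — that is, $(t_0,0)$ is a multiple zero of $y$, contradicting what we just established. (If $u(t,0)=u_*(0)$ at isolated times only one still gets a multiple zero at a boundary point; if it holds on an interval, $y\equiv 0$ there by the backward uniqueness / unique continuation for \eqref{linPDE}, forcing $u\equiv u_*$, which is impossible for a nonstationary orbit.)

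The main obstacle I anticipate is the justification that the zero number is \emph{constant} — not merely bounded — near the two ends of the heteroclinic, i.e. the $C^1$-continuity of $z(\cdot)$ at profiles with only simple zeros, together with the claim that $e_{j+1}-e_j$, $e_{j+1}-u_*$, $e_j-u_*$ indeed have only simple zeros. The latter is where hyperbolicity of the equilibria and the second-order ODE structure enter: a multiple zero of a difference of two equilibria would make that difference solve a linear second-order ODE with a double zero, hence vanish identically by uniqueness, contradicting that the equilibria are distinct. Granting these two facts, the argument is a clean bookkeeping of zero numbers at the boundary point $x=0$ exactly as in the semilinear proof of Fiedler and Rocha; the fully nonlinear character of \eqref{PDE} has already been absorbed into the fact that $u_t$ and differences of solutions satisfy the linear equation \eqref{linPDE} with bounded coefficients, so Lemma \ref{droplemquasi} and Proposition \ref{Znuminvmfld} apply verbatim.
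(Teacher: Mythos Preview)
Your proof is correct and follows exactly the approach the paper defers to: the paper does not spell out a proof but simply states that the argument ``follows from nodal properties of solutions of \eqref{PDE}; see the subsequent discussion of Definition 1.6 in \cite{FiedlerRocha96},'' and your write-up is precisely that discussion --- Morse blocking is excluded by sandwiching $z(e_{j+1}-e_j)$ between $i(e_j)$ via Proposition~\ref{Znuminvmfld} applied at both ends of the heteroclinic, and zero-number blocking is excluded by the forced multiple boundary zero of $u(t)-u_*$ at the crossing time $t_0$. Your identification of the two technical points (simple zeros of differences of distinct equilibria via the second-order ODE, and $C^1$-stability of $z$ at such profiles) is exactly what is needed, and both hold in the fully nonlinear setting for the reasons you give.
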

The next claim is an act of liberalism: If a heteroclinic connection among two equilibria is not forbidden by the blocking law, then a connection between them exists. 
\begin{prop} \emph{\textbf{Liberalism Principle.}} \label{estabhets2}
Consider hyperbolic equilibria $e_{j+1},e_j\in\mathcal{E}$ of \eqref{PDE} such that $i(e_{j+1})=i(e_j)+1$. If $e_{j+1}$ and $e_j$ are not blocked, then there exists a heteroclinic orbit from $e_{j+1}$ to $e_j$ in forward time.
\end{prop}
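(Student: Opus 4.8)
The plan is to establish the Liberalism Principle by reducing the fully nonlinear equation to a setting where the semilinear results of Fiedler--Rocha \cite{FiedlerRocha96} apply, and then invoking the functional-analytic tools already collected in Section \ref{sec:func}. The key observation is that all the structural ingredients needed for the proof are the same as in the semilinear case: the gradient structure (Lyapunov function \eqref{BOOK}), the dropping lemma \ref{droplemquasi}, the filtration of invariant manifolds (Proposition \ref{hierarchyquasi}), the comparison of zero numbers with Morse indices (Proposition \ref{Znuminvmfld}), and the Morse--Smale transversality (Theorem \ref{MorseSmale}). So the strategy is to show that the classical proof carries over verbatim once these tools are in place.

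\textbf{Step 1: Reduce to the case of adjacent equilibria with Morse index difference one.} By the Cascading Principle (Proposition \ref{cascading}) and the Blocking Principle (Proposition \ref{estabhets}), it suffices to treat $e_{j+1}, e_j$ with $i(e_{j+1}) = i(e_j)+1$ that are not blocked, i.e., $z(e_{j+1}-e_j) = i(e_j) =: m$ and there is no equilibrium $u_*$ strictly between them at $x=0$ with $z(e_{j+1}-u_*) = z(e_{j+1}-e_j) = z(e_j-u_*)$.

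\textbf{Step 2: Construct a candidate trajectory on the boundary of the unstable manifold.} Following \cite{FiedlerRocha96}, I would look at the unstable manifold $W^u(e_{j+1})$, which by the semilinear topological results (and conjecturally in the fully nonlinear case, but here we only need the local structure from Proposition \ref{hierarchyquasi} and the Lyapunov function) has its lower-dimensional boundary stratum $W^u_m(e_{j+1}) \setminus W^u_{m-1}(e_{j+1})$ on which solutions behave asymptotically like $\pm\phi_m$ as $t\to-\infty$, so that $z(u(t)-e_{j+1}) = m$ there. Since $E$ decreases along the flow and the attractor has only finitely many equilibria, every such trajectory converges to some equilibrium $e'$ with $i(e') < i(e_{j+1})$; by Proposition \ref{Znuminvmfld} applied at the $\omega$-limit, $z(u(t)-e') \geq i(e')$, while the dropping lemma forces $z(u(t)-e') \leq m$. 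One then shows, exactly as in \cite{FiedlerRocha96}, that the only equilibrium that can be the limit of the ``extremal'' such trajectory — the one whose value $u(t,0)$ stays closest to $e_j(0)$ among all ordered choices — is $e_j$ itself: any other candidate $e'$ would have to lie between $e_{j+1}$ and $e_j$ at $x=0$ and satisfy the zero-number equalities that define ``zero number blocking,'' contradicting the hypothesis. This uses the Sturm ordering / Wolfrum-type combinatorics of zero numbers and the adjacency condition.

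\textbf{Step 3: Upgrade from ``a connecting trajectory exists'' to transversality and conclude.} Once a heteroclinic from $e_{j+1}$ to $e_j$ is found, Theorem \ref{MorseSmale} guarantees the intersection $W^u(e_{j+1})\cap W^s(e_j)$ is transverse of dimension $i(e_{j+1})-i(e_j)=1$, confirming it is a genuine one-parameter family of heteroclinics, which closes the argument; no separate transversality work is needed because the Morse--Smale property is already in hand. \textbf{The main obstacle} I anticipate is Step 2: making rigorous that the relevant boundary stratum of $W^u(e_{j+1})$ is nonempty and its trajectories have the claimed zero number and ordering behavior in the fully nonlinear setting, where — as the paper itself notes — the global topology of unstable manifolds from \cite{FiedlerRocha15,FiedlerRochaCW14} is not yet established. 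The honest route is to observe that the Fiedler--Rocha proof of liberalism actually only uses (i) the existence of the filtration with the linear asymptotics of Proposition \ref{hierarchyquasi}, (ii) the dropping lemma, (iii) the gradient structure, and (iv) the Morse--Smale transversality — all of which hold here — together with purely combinatorial facts about zero numbers of differences of equilibria that depend only on the Sturm ODE for equilibria, not on the PDE being semilinear. Hence the proof transfers, and I would phrase the argument as: ``the proof of \cite[Lemma 1.5 / Theorem 1.7]{FiedlerRocha96} uses only the above-listed ingredients, each of which is valid for \eqref{PDE}, and therefore applies mutatis mutandis.''
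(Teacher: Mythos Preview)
Your proposal diverges from the paper's proof, and your characterization of what \cite{FiedlerRocha96} actually does is inaccurate.

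The paper follows \cite[Lemma 1.7, Section 4]{FiedlerRocha96} and uses \emph{Conley index theory}, not a direct extremal-trajectory argument. One constructs an isolating neighborhood $N_\epsilon(u_\pm) = B_\epsilon(u_-)\cup B_\epsilon(u_+)\cup K(u_\pm)$, where $K(u_\pm)$ is cut out by the conditions $z(u-u_\pm)=i(u_+)$ and $u_+(0)\le u(0)\le u_-(0)$; the non-blocking hypothesis ensures $N_\epsilon$ contains no equilibria other than $u_\pm$. One then computes the Conley index of its maximal invariant set $\Sigma$ via a three-step homotopy: (a) a semilinear saddle-node model \eqref{prototype2} in which the two equilibria collide as $\mu\to 0$, giving $C(\Sigma_\mu)=[0]$ by homotopy invariance; (b) a coordinate change $L$ matching the model equilibria to $u_\pm$ while preserving zero numbers; (c) a convex homotopy \eqref{homotopyHET} of the nonlinearity from the transformed model to the actual $\tilde F$, with spectral-shift cutoffs keeping $u_\pm$ uniformly hyperbolic. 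The dropping lemma keeps $N_\epsilon$ isolating throughout, so $C(\Sigma)=[0]$, contradicting the wedge-of-spheres index $[\mathbb{S}^{i(u_-)}]\vee[\mathbb{S}^{i(u_+)}]$ one would obtain if $u_\pm$ were disconnected. The only genuine adaptation to the fully nonlinear setting lies in step (c), where the homotopy is carried out in the splitting \eqref{diffIFT2split}.

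Your assertion that the \cite{FiedlerRocha96} liberalism proof ``uses only'' the filtration, the dropping lemma, the gradient structure, and Morse--Smale transversality is wrong: the Conley index homotopy is the core of that argument, and none of the ingredients you list produces a connecting orbit on its own. The extremal-trajectory idea in your Step~2 is not the method of \cite{FiedlerRocha96}; it is closer in spirit to \cite{FiedlerBrunovsky89} or to the order-relation machinery of \cite{Wolfrum02}, but as written it has a gap: you neither define the ``extremal'' trajectory nor justify why it must land on $e_j$, and the combinatorial exclusion of other $\omega$-limits you allude to would itself require a substantial argument. Your Step~3 is also superfluous, since the proposition asserts only existence of a heteroclinic.
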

%
%
The proof of \emph{liberalism} in Proposition \ref{estabhets2} follows from an application of the Conley index theory, which can be applied due to pre-compactness of orbits; see \cite[Lemma 1.7 and Section 4]{FiedlerRocha96}.
We now provide the necessary modification of their proof. 

The Conley index can be applied to detect heteroclinics as follows. Construct a closed neighborhood $N$ such that its maximal invariant subspace is the closure of the set of heteroclinics between $u_\pm$, i.e. $\Sigma=\{u_\pm\}\cup(\overline{W^u(u_-)\cap W^s(u_+)})$.
Suppose, towards a contradiction, that there are no heteroclinics connecting $u_-$ and $u_+$, i.e., $\Sigma=\{ u_-,u_+\}$. Then, the index is given by the wedge sum $C(\Sigma)=[\mathbb{S}^n]\vee [\mathbb{S}^m]$, where $n>m$ are the respective Morse index of $u_-$ and $u_+$.
However, if one can prove that $C(\Sigma)=[0]$, 
this would yield a contradiction and there must be a connection between $u_-$ and $u_+$. Note that the Morse-Smale structure excludes connection from $u_+$ to $u_-$, and hence there is a connection from $u_-$ to $u_+$.
The two ingredients missing in the proof are the construction of an isolating neighborhood $N$ of $\Sigma$ and the proof that $C(\Sigma)=[0]$.

The isolating neighborhood of $\Sigma$ is given by $N_\epsilon (u_\pm):=B_\epsilon(u_-)\cup B_\epsilon(u_+) \cup K({u_\pm})$, where  $B_\epsilon(u_\pm)\subseteq X^\alpha$ are $\epsilon$-balls centered at $u_\pm$ and $K({u_\pm})$ is the closed set defined as
\begin{equation}
    K({u_\pm}):= \left\{ u \in X \mid 
        \begin{array}{c} 
        z(u-u_-)=i(u_+)=z(u-u_+) \\
        u_+(0)\leq u(0)\leq u_-(0) 
        \end{array} \right\}.
\end{equation}
Note that $N_\epsilon (u_\pm)$ has no equilibria besides $u_-$ and $u_+$ for sufficiently small $\epsilon>0$, due to hyperbolicity and the zero number blocking. 

Moreover, the maximal invariant subset $\Sigma$ of $N_\epsilon$ is the set of the heteroclinics from $u_-$ to $u_+$ given by $\overline{W^u(u_-)\cap W^s(u_-)}$. 
On one hand, since $\Sigma$ is globally invariant, then it is contained in the attractor $\mathcal{A}$, which consists of equilibria and heteroclinics. Since there are no other equilibria in $N_\epsilon (u_\pm)$ besides $u_\pm$, then the only heteroclinics that can occur are between them.
On the other hand, Theorem \ref{Znuminvmfld} implies that along a heteroclinic $u(t)\in\mathcal{H}$ the zero number satisfies $z^t(u-u_\pm)=i(u_+)$ for all time, since $i(u_-)=i(u_+)+1$. Therefore $u(t)\in K({u_\pm})$ and the closure of the orbit is contained in $N_\epsilon (u_\pm)$. Since the closure of the heteroclinic is invariant, it must be contained in $\Sigma$.

Lastly, it is proven that $C(\Sigma)=[0]$ in three steps, yielding the desired contradiction. 
In the first step, a model is constructed displaying a saddle-node bifurcation with respect to a parameter $\mu\in \mathbb{R}$. For $n:=z(u_+-u_-)\in\mathbb{N}$ fixed,
\begin{equation}\label{prototype2}
    v_t=v_{\xi\xi}+\lambda_nv+ g_n(\mu,\xi,v,v_\xi)
\end{equation}
where $\xi\in [0,\pi]$ has Neumann boundary conditions, $\lambda_n=-n^2$ are the eigenvalues of the laplacian with respective eigenfunctions $\cos(n\xi)$, and 
\begin{equation}
    g_n(\mu,\xi,v,v_\xi):=\left(v^2+\frac{1}{n^2}v^2_\xi-\mu\right)\cos(n\xi).
\end{equation}
For $\mu>0$, we obtain that $v_\pm=\pm \sqrt{\mu}\cos(n\xi)$ are equilibria solutions of \eqref{prototype2} such that $z(v_+-v_-)=n$, since the $n$ intersections of $v_-$ and $v_+$ will be at its $n$ zeroes. 
Moreover, those equilibria are hyperbolic for small $\mu>0$, such that $i(v_+)=n+1$ and $i(v_-)=n$, see \cite{FiedlerRocha96}.

Note that the equilibria $v_\pm$ together with their connecting orbits form an isolated set $\Sigma_\mu(v_\pm):= \overline{W^u(v_-)\cap W^s(v_+)}$ with isolating neighborhood $N_\epsilon(v_\pm)$. Since the bifurcation parameter $\mu$ can be seen as a homotopy parameter, the Conley index is the one of a point by homotopy invariance, i.e., $C(\Sigma_\mu(v_\pm))=C(\Sigma_0(v_\pm))=[0]$.

In the second step, $v_-$ and $v_+$ are transformed respectively into $u_-$ and $u_+$.
Recall $n=z(v_--v_+)=z(u_+-u_-)$. Hence, choose $\xi(x)$ a smooth diffeomorphism of $[0,\pi]$ that maps the zeros of $v_-(\xi)-v_+(\xi)$ to the zeros of $u_-(x)-u_+(x)$. Therefore, the zeros of $v_-(\xi(x))-v_+(\xi(x))$ and $u_-(x)-u_+(x)$ occur in the same points in the variable $x\in [0,\pi]$.
Next consider the transformation 
\begin{align*}
    L: X &\to X   \\ 
    v(\xi)&\mapsto l(x)[v(\xi(x))-v_-(\xi(x))]+u_-(x),
\end{align*}
where $l(x)$ is defined pointwise through
\begin{align*}
    l(x):=
    \begin{cases}
        \frac{u_+(x)-u_-(x)}{v_+(\xi(x))-v_-(\xi(x))} &, \text{ if } v_+(\xi(x))\neq v_-(\xi(x)), \\
        \frac{\partial_x(u_{+}(x)-u_{-}(x))}{\partial_x(v_{+}(\xi(x))-v_{-}(\xi(x)))} &, \text{ if } v_+(\xi(x))= v_-(\xi(x)),
    \end{cases}
\end{align*}
such that the coefficient $l(x)$ is smooth and nonzero due to the l'H\^opital rule. Hence, $L(v_-)=u_-$ and $L(v_+)=u_+$ as desired. Note 
that $L$ is a homeomorphism, and hence a homotopy equivalence.

Moreover, the number of intersections of functions is invariant under the map $L$,
and hence $K({v_\pm})$ is mapped to $K({u_\pm})$ under $L$. 
Consider $w(t,x):=L(v(t,\xi))$, hence the map $L$ transforms the equation \eqref{prototype2} into 
\begin{equation}\label{IDK}
    w_t=\tilde{a}(x)w_{xx}+\tilde{f}(x,w,w_x)
\end{equation}
where the Neumann boundary conditions are preserved, and the terms $\tilde{a},\tilde{f}$ can be obtained from $g_n$ and the transformations
$w=L(v)$ and $\xi(x)$, see \cite{FiedlerRocha96}.

Note that $v_\pm$ are mapped into $w_\pm:=L(v_\pm)=u_\pm$, which are equilibria of \eqref{IDK}, with same zero numbers and Morse indices as $v_\pm$ and $u_\pm$.
The isolated invariant set $\Sigma_\mu(v_\pm)$ is transformed into $L(\Sigma_\mu(v_\pm))=\Sigma_\mu(w_\pm)$, which is still isolated and invariant, with invariant neighborhood $L(N_\epsilon(v_\pm))=N_\epsilon(w_\pm)$. Moreover, the Conley index is preserved, since $L$ is a homotopy equivalence: $C(\Sigma_\mu(v_\pm))= C(L(\Sigma_\mu(v_\pm)))=C(\Sigma_\mu(w_\pm))$.

In the third step, we homotope the equation from $w$ to $u$, i.e., we homotope the diffusion coefficient $\tilde{a}$ and nonlinearity $\tilde{f}$ from the model \eqref{IDK} to the desired diffusion $\tilde{F}^1$ and reaction $\tilde{F}^0$ in the equation \eqref{diffIFT2split}. Indeed, consider the parabolic equation 
\begin{equation}\label{homotopyHET}
    u_t=\tilde{F}^\tau(x,u,u_x,u_{xx}),
\end{equation}
where
\begin{equation}
    \tilde{F}^\tau:=(1-\tau) (\tilde{a}u_{xx}+\tilde{f})+ \tau(\tilde{F}^1u_{xx}+\tilde{F}^0)+\sum_{j=- \text{ , } +}\chi_{u_j}\mu_{u_j}(\tau)[u-{u_j}(x)],
\end{equation}
and $\chi_{u_j}$ are cut-offs being 1 nearby $u_j$ and zero far from $u_j$, the coefficients $\mu_j(\tau)$ are zero near $\tau=0,1$ and shift the spectra of the linearization at $u_\pm$ appropriately such that uniform hyperbolicity of these equilibria is guaranteed during the homotopy. 

Therefore the equilibria $u_\pm$ are preserved throughout the homotopies. Moreover, their connecting orbits $\Sigma^\tau(u_\pm):=\overline{W^u(u_-)\cap W^u(u_+)}\subseteq K({u_\pm})$, for all $\tau\in [0,1]$, since the dropping lemma holds throughout the homotopy. The equilibria $u_\pm$ do not bifurcate as $\tau$ changes, due to hyperbolicity. Choosing $\epsilon>0$ small enough, the neighborhoods $N_\epsilon(u_\pm)$ form an isolating neighborhood of $\Sigma^\tau(u_\pm)$ throughout the homotopy. Indeed, $\Sigma^\tau(u_\pm)$ can never touch the boundary of $K({u_\pm})$, except at the points $u_\pm$ by the dropping lemma. Once again the Conley index is preserved by homotopy invariance, and thus $C(\Sigma(u_\pm))=C(\Sigma_\mu(w_\pm))=C(\Sigma_\mu(v_\pm))=[0]$.
This concludes the liberalism proof.


Propositions \ref{cascading}, \ref{estabhets} and \ref{estabhets2} yield the existence of heteroclinics between $u_-$ and $u_+$ such that $n := i (u_-) - i(u_+) > 0$ if, and only if they are \emph{cascadly adjacent}, i.e., in case there is a cascade of equilibria $\{ e_j\}_{j=0}^n$ with $e_0:=u_-$ and $e_n:=u_+$ such that for all $j=0,...,n-1$ the following holds:
  \begin{enumerate}
     \item $i(e_{j+1})=i(e_j)+1$,
     \item $z(e_j-e_{j+1})=i(e_{j+1})$,
     \item there are no equilibria $u_*$ between $e_{j}$ and $e_{j+1}$ at $x=0$ satisfying \eqref{zeronumberblock}.
 \end{enumerate} 
However, Theorem \ref{attractorthmquasi} yields existence of heteroclinics relying on the notion of adjacency in \eqref{adjintro}, which does not involve a cascade. These notions of adjacency coincide, and this is the core of Wolfrum's ideas in \cite{Wolfrum02}. See also \cite[Appendix 7]{FiedlerRochatryp2}.

\begin{prop} \emph{\textbf{Wolfrum's equivalence.}} \label{wolfrumlemma}
Consider equilibria $u_\pm\in\mathcal{E}$ such that $i(u_-)>i(u_+)$. The equilibria $u_\pm$ are adjacent if, and only if they are cascadly adjacent.
\end{prop}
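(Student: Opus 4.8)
The plan is to show the two notions of adjacency are equivalent by proving each implication, using the dropping lemma and the structure of the equilibrium set as the main tools. Throughout, write $z_\pm := z(u_--u_+)$; because $u_-$ and $u_+$ are hyperbolic and equation \eqref{PDE} is scalar, the difference of any two distinct equilibria has only simple zeros, so zero numbers of such differences are well-defined integers and behave well under the nodal comparisons of Proposition \ref{Znuminvmfld}.

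First I would prove that cascadly adjacent implies adjacent. Given a cascade $\{e_j\}_{j=0}^n$ with $e_0=u_-$, $e_n=u_+$, Morse indices dropping by one at each step, $z(e_j-e_{j+1})=i(e_{j+1})$, and no intermediate equilibrium at $x=0$ satisfying the zero-number blocking, I would first argue that $z(e_j-e_{j+1})$ is constant along the cascade, equal to some common value $m$. This uses nodal comparison: if $z(e_j-e_{j+1})$ differed from $z(e_{j+1}-e_{j+2})$, then by the standard ``interleaving'' argument (a variant of the dropping lemma applied to differences of equilibria, or directly the Sturm comparison for the ODE) one of the three differences among $e_j, e_{j+1}, e_{j+2}$ would force $e_{j+1}$ to be a blocking intermediate equilibrium between the other two at $x=0$ --- contradicting (3). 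Hence $z(u_--u_+)=z(e_j-e_{j+1})=m$ for all $j$. Then, towards a contradiction, suppose $u_-$ and $u_+$ are not adjacent, i.e. there is an equilibrium $u_*$ with $u_*(0)$ strictly between $u_-(0)$ and $u_+(0)$ and $z(u_--u_*)=z(u_--u_+)=z(u_+-u_*)=m$. I would locate the index $j$ for which $u_*(0)$ lies between $e_j(0)$ and $e_{j+1}(0)$ (such $j$ exists since the $e_j(0)$ march monotonically, after possibly reordering, from $u_-(0)$ to $u_+(0)$ --- here one must be careful that the cascade's values at $x=0$ need not be monotone, so instead I would use a minimal/maximal selection argument among all equilibria in the relevant zero-number class). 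Using the nodal relations, $z(e_j-u_*)=z(e_{j+1}-u_*)=m$, which makes $u_*$ a zero-number-blocking equilibrium between $e_j$ and $e_{j+1}$, contradicting (3). Therefore $u_-$ and $u_+$ are adjacent.

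Next I would prove that adjacent implies cascadly adjacent; this is the harder direction and the main obstacle. The idea, following Wolfrum, is to construct the cascade inductively. Suppose $u_-, u_+$ are adjacent with $i(u_-)-i(u_+)=n>0$. If $n=1$, adjacency already gives the one-step cascade once we check $z(u_--u_+)=i(u_+)$: this follows because adjacency forbids the zero-number blocking, and the Morse blocking $z(u_--u_+)\neq i(u_+)$ would, via Proposition \ref{Znuminvmfld} applied along any would-be connecting orbit together with the blocking principle, be incompatible --- more directly, among all equilibria the zero number $z(u_--u_+)$ is pinned by a counting argument relating $i(u_-), i(u_+)$ and the sign of $(u_--u_+)(0)$, exactly as in the semilinear Fusco--Rocha theory. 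For $n\geq 2$, I would pick an intermediate equilibrium: among all equilibria $e$ with $e(0)$ between $u_-(0)$ and $u_+(0)$ and $z(u_--e)=z(u_--u_+)$, choose the one with $e(0)$ closest to $u_-(0)$ (if none exists, adjacency is violated or we are in a degenerate case handled separately). One then shows $u_-$ and $e$ are adjacent with $i(e)=i(u_-)-1$, and $e$ and $u_+$ are adjacent with the index gap reduced by one, so induction applies and concatenates the cascades. The delicate points are: (a) showing the chosen $e$ has Morse index exactly $i(u_-)-1$, which again needs the zero-number/Morse-index bookkeeping of Proposition \ref{Znuminvmfld} and the fact that between adjacent equilibria the zero numbers are forced; and (b) verifying that the sub-pairs $(u_-,e)$ and $(e,u_+)$ inherit adjacency, i.e. no new blocking equilibrium appears --- this is where one uses that $e(0)$ was chosen extremal, so any equilibrium blocking $(u_-,e)$ would have been a better choice, and any equilibrium blocking $(e,u_+)$ would, by the nodal relations, also block $(u_-,u_+)$.

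Finally I would note that the argument is purely one about the ODE equilibrium set together with the nodal comparison lemmas, none of which used semilinearity: the equilibria of \eqref{PDE} are governed by a second-order ODE whose solutions are distinguished by initial data at $x=0$, the zero number of differences of distinct equilibria satisfies Sturm comparison, and Proposition \ref{Znuminvmfld} provides the link to Morse indices. Hence the proof transcribes verbatim from \cite{Wolfrum02} and \cite[Appendix 7]{FiedlerRochatryp2}; I would state this and only spell out the steps where the fully nonlinear setting needs the tools assembled in Section \ref{sec:func} (primarily Lemma \ref{droplemquasi} and Proposition \ref{Znuminvmfld}) in place of their semilinear analogues. \qed
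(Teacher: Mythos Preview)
The paper does not actually prove Proposition~\ref{wolfrumlemma}: it simply states the result and refers to \cite{Wolfrum02} and \cite[Appendix~7]{FiedlerRochatryp2}. Your concluding paragraph, where you say the argument ``transcribes verbatim'' from those references once the fully nonlinear versions of Lemma~\ref{droplemquasi} and Proposition~\ref{Znuminvmfld} are in place, is therefore exactly in line with what the paper does, and is in fact the correct thing to say.

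However, the detailed outline you sketch before that final paragraph contains a genuine error. You claim, for the direction ``cascadly adjacent $\Rightarrow$ adjacent'', that $z(e_j-e_{j+1})$ is constant along the cascade. This is false: by the very definition of cascadly adjacent, $z(e_j-e_{j+1})$ equals the smaller of the two Morse indices $i(e_j),i(e_{j+1})$, and these indices run through \emph{all} integers from $i(u_+)$ to $i(u_-)$. Hence the zero numbers $z(e_j-e_{j+1})$ take the distinct values $i(u_+),i(u_+)+1,\dots,i(u_-)-1$ along the cascade. Everything you build on the putative common value $m$ --- in particular the localization of a blocking $u_*$ between some $e_j$ and $e_{j+1}$ with matching zero numbers --- collapses. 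The same issue undermines your claim that $z(u_--u_+)=m$; in fact $z(u_--u_+)$ can be any value in the range $[i(u_+),i(u_-)-1]$, and nothing forces it to agree with every step of the cascade.

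If you want a clean self-contained argument rather than a bare citation, the simplest route is the dynamical one implicit in the paper's organization: Propositions~\ref{cascading}, \ref{estabhets}, \ref{estabhets2} together give ``heteroclinic $\Leftrightarrow$ cascadly adjacent'', while Wolfrum's theorem in \cite{Wolfrum02} (whose proof uses only the equilibrium ODE and nodal properties, hence carries over) gives ``heteroclinic $\Leftrightarrow$ adjacent''. Composing the two equivalences yields Proposition~\ref{wolfrumlemma} without any need to track zero numbers along a cascade directly.
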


\subsection{Shooting: Finding Equilibria and Computing Adjacency}\label{sec:perm}

The next step on our quest to construct Sturm attractors is to find all equilibria of \eqref{PDE} and compute their Morse indices and zero numbers, in order to discern which equilibria are adjacent. We will use shooting methods similar to \cite{FuscoHale85,RochaHale85,Rocha85,Rocha88,Rocha94}. 

The equilibria equation associated to \eqref{PDE} is
\begin{equation}\label{eqeq}
    0=f(x,u,u_x,u_{xx},0) 
\end{equation}
for $x \in [0,\pi]$ with Neumann boundary conditions.
Equation \eqref{eqeq} can be translated as follows, according to the implicit function theorem, as in \eqref{diffIFT} and \eqref{diffIFTsplit},
\begin{equation}\label{IFTshoot}
    u_{xx}=F^0(x,u,u_x).
\end{equation}
In turn, equation \eqref{IFTshoot} can be reduced to a first order autonomous system,
\begin{subequations}\label{shootflow}
\begin{align}
    u'&= p,\\
    p'&=F^0(x,u,p),\\
    x'&=1
\end{align}
\end{subequations}
where the Neumann boundary condition amounts to 
$p=0$ at $x=0,\pi$ and $(.)'$ denotes a derivative with respect to a new parameter $\tau:=x\in [0,1]$. We suppose solutions of \eqref{shootflow} exist for all $x\in[0,\pi]$ and any initial data with $(x,u,p)=(0,u_0(0),0)$.

Equilibria of the PDE \eqref{PDE} can be found as follows. They must be in the line 
\begin{equation}
    L_0:=\{(x,u,p)\in\mathbb{R}^3 \textbf{ $|$ }  (x,u,p)=(0,a,0) \text{ and } a\in\mathbb{R}\},  
\end{equation}
because of Neumann boundary at $x=0$. Then, this line can be evolved under the flow of \eqref{shootflow} until $x=\pi$, yielding the \emph{shooting manifold} defined as
\begin{equation}
    M:= \{(x,u,p)\in[0,\pi]\times \mathbb{R}^3 \textbf{ $|$ } (x,u(x,a,0),p(x,a,0))  \text{ and } a\in\mathbb{R}  \},
\end{equation}
where $(x,u(x,a,0),p(x,a,0))$ is the solution of \eqref{shootflow} which evolves the initial data $(0,a,0)$.
Denote by $M_x$ the cross-section of $M$ for some fixed $x\in [0,\pi]$, which is a curve parametrized by $a\in\mathbb{R}$. 
We call the cross-section $M_\pi$ by \emph{shooting curve}, which is an object that carries all the information regarding existence, hyperbolicity and adjacency of equilibria, as in the following Lemmata. See \cite{Rocha85,Hale99,Rocha91,FiedlerRocha21}. Note that intersections of the curve $M_\pi$ and the line $L_\pi:=\{(x,u,p)\in\mathbb{R}^3 \textbf{ $|$ }  (x,u,p)=(0,b,\pi) \text{ and } b\in\mathbb{R}\}$ also satisfy Neumann boundary conditions at $x=\pi$, and consequently yield equilibria solutions of the PDE \eqref{PDE}.
%
%
\begin{figure}[H]\centering
    \begin{tikzpicture}[scale=0.5]
    \draw[->] (-4,0) -- (4,0) node[anchor=west]{\footnotesize{$u$}};
    \draw[->] (0,0) -- (0,2) node[anchor=south]{\footnotesize{$p$}};    
    \draw[->,rotate=116.5] (0,0) -- (0,6) node[anchor=east]{\footnotesize{$x$}};

    \draw[dashed,shift={(-3,0)},rotate=116.5] (0,1.1) -- (0,4.5);
    \draw[shift={(-3,0)},rotate=116.5] (0,0) -- (0,0.7);
    \draw[shift={(3.1,0)},rotate=116.5] (0,0) -- (0,4.5);

    \draw [shift={(-4,-2)},domain=0:3.14,variable=\t,smooth] plot ({\t*cos(\t r)},{\t*sin(\t r)}); 
    \draw [shift={(-4,-2)},domain=-3.4:-3.14,variable=\t,smooth] plot ({\t},{-(0.7)*(\t-3.14)*(\t+3.14)}); 
    \draw [shift={(-4,-2)},domain=0:3.14,variable=\t,smooth] plot ({-\t*cos(\t r)},{-\t*sin(\t r)}); 
    \draw [shift={(-4,-2)},domain=3.14:3.4,variable=\t,smooth] plot ({\t},{(0.7)*(\t-3.14)*(\t+3.14)}); 
    \filldraw (-5,-0.15) circle (0.1pt) node[anchor=south]{\footnotesize{$M_\pi$}};

    \draw[very thick,shift={(-4,-2)}] (3,0) -- (-3,0) node[anchor=east]{\footnotesize{$L_\pi$}};
    
    \filldraw (-0.9,-2) circle (3.5pt);    
    \filldraw (-4,-2) circle (3.5pt);    
    \filldraw (-7.1,-2) circle (3.5pt);    

    \draw [domain=0:3.14,variable=\t,smooth] plot ({\t-5},{0.2*(cos(\t r))-0.35});
    \draw [domain=1.1:2.7,variable=\t,smooth] plot ({\t-2.6},{-1.2*(cos(\t r))-2.6});
    
    \end{tikzpicture}
    \caption{A shooting manifold with cross-section at $x=\pi$ given by the shooting curve $M_\pi$. 
    The transverse intersections of $M_\pi$ and $L_\pi$ correspond to hyperbolic equilibria of \eqref{PDE}.
    }
\end{figure}
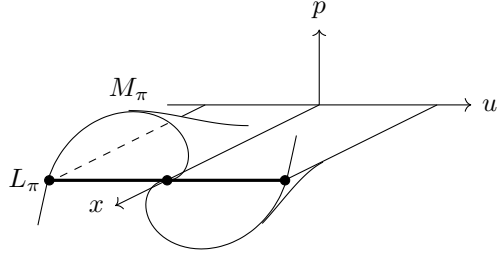
\begin{lem}\label{lem:eqbyshoot}
    \textbf{\emph{Hyperbolic Equilibria by Shooting.}} The set of equilibria $\mathcal{E}$ of \eqref{PDE} is in one-to-one correspondence with $M_{\pi}\cap L_{\pi}$.
    Moreover, an equilibrium $u_*\in\mathcal{E}$ 
    is hyperbolic if, and only if, $M_\pi$ intersects $L_\pi$ transversely at $(\pi,u_*(\pi),0)$. 
\end{lem}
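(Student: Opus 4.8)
The plan is to push everything down to the finite dimensional ODE \eqref{shootflow} and to read off both assertions from the variational equation along the equilibrium trajectory.

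First I would establish the bijection. A function $u_*$ is an equilibrium of \eqref{PDE} precisely when it solves \eqref{IFTshoot} on $[0,\pi]$ with $u_*'(0)=u_*'(\pi)=0$; lifted to the first order system \eqref{shootflow}, this says exactly that the trajectory issued from $(0,u_*(0),0)\in L_0$ meets the slice $\{x=\pi\}$ at a point of $L_\pi$, i.e.\ at a point of $M_\pi\cap L_\pi$. Conversely, a point of $M_\pi\cap L_\pi$ has the form $(\pi,u(\pi,a,0),0)$ with $p(\pi,a,0)=0$, and then $x\mapsto u(x,a,0)$ solves \eqref{IFTshoot} with vanishing derivative at both endpoints, hence is an equilibrium. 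The assignments $u_*\mapsto(\pi,u_*(\pi),0)$ and $(\pi,c,0)\mapsto$ ``the trajectory through it'' are mutually inverse: since $f\in C^2$ forces $F^0\in C^1$ by the implicit function theorem, and solutions of \eqref{shootflow} exist on all of $[0,\pi]$ by hypothesis, uniqueness for \eqref{shootflow} applied backwards from $x=\pi$ shows that each terminal point $(\pi,c,0)$ is attained by at most one trajectory; this gives both well-definedness and injectivity of the second map, and the composition in either order is the identity.

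Second I would treat hyperbolicity. Linearizing the equilibrium identity $f(x,u,u_x,u_{xx},0)=0$ in a direction $v$ along $u_*$ gives $f_q v''+f_p v'+f_u v=0$ at the common point $P_x:=(x,u_*(x),u_*'(x),u_*''(x),0)$, whereas differentiating the defining relation of $\tilde F$ yields $\tilde F_q=-f_q/f_r$, $\tilde F_p=-f_p/f_r$, $\tilde F_u=-f_u/f_r$, evaluated at the same $P_x$ because $u_t=0$ at an equilibrium. Hence the linearization $A_*$ of the right hand side of \eqref{diffIFT2} at $u_*$ satisfies $A_* v=-f_r^{-1}(f_q v''+f_p v'+f_u v)$, and since $f_r\neq 0$ by \eqref{par} the condition $v\in\ker A_*$ is equivalent to $v$ solving the linearized equilibrium ODE $v''=F^0_p v'+F^0_u v$ with Neumann conditions $v'(0)=v'(\pi)=0$. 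As the spectrum of $A_*$ is real and simple (regular Sturm--Liouville), $u_*$ fails to be hyperbolic exactly when this boundary value problem has a nontrivial solution. Now differentiating \eqref{shootflow} with respect to the shooting parameter $a$ at $a_*:=u_*(0)$, the variations $V:=u_a(\cdot,a_*,0)$ and $W:=p_a(\cdot,a_*,0)$ solve that same linearized ODE with $V(0)=1$ and $W(0)=V'(0)=0$. The tangent of $M_\pi$ at $(\pi,u_*(\pi),0)$, read in the plane $\{x=\pi\}\cong\mathbb{R}^2_{u,p}$, is $(V(\pi),W(\pi))=(V(\pi),V'(\pi))$, while $L_\pi$ has tangent $(1,0)$ there; hence the two curves fail to be transverse iff $V'(\pi)=0$. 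Finally, by uniqueness every Neumann solution of the linearized ODE whose derivative vanishes at $x=0$ is a scalar multiple of $V$, and $V\not\equiv 0$ because $V(0)=1$, so a nontrivial Neumann solution exists iff $V'(\pi)=0$. Combining: $u_*$ is hyperbolic $\iff V'(\pi)\neq 0\iff M_\pi$ meets $L_\pi$ transversely at $(\pi,u_*(\pi),0)$. (Along the way one also sees $(V(\pi),W(\pi))\neq(0,0)$, again by uniqueness, so $M_\pi$ is genuinely a regular curve near the equilibrium.)

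The step I expect to be the main obstacle is precisely this matching of linearizations: the notions ``hyperbolic'' and ``Morse index'' are defined through $A_*$, which comes from the $u_t$-solved form \eqref{diffIFT2}, while the shooting is run with $F^0$ from the $u_{xx}$-solved form \eqref{IFTshoot}, so one must keep careful track that all partial derivatives of $f$, of $F^0$ and of $\tilde F$ are evaluated at the single common point $P_x$, and it is exactly the parabolicity condition $f_q,f_r\neq 0$ that forces the two linearized operators to share the same kernel. Everything else is standard ODE theory --- existence, uniqueness, smooth dependence on parameters, and the variational equation.
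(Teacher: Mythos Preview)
Your proposal is correct and follows essentially the same route as the paper: both establish the bijection via uniqueness for the shooting ODE, and both prove the hyperbolicity claim by identifying the linearized eigenvalue problem at $\lambda=0$ with the variational equation $(u_a,p_a)$ of the shooting flow, using implicit differentiation to match the coefficients coming from $\tilde F$ with those coming from $F^0$. If anything, you are more explicit than the paper in spelling out the equivalence ``nontrivial Neumann solution $\iff V'(\pi)=0 \iff$ non-transversality,'' which the paper delegates to a citation.
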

\begin{proof}
To prove the first part, note that a point in $M_{\pi}\cap L_{\pi}$ satisfies the equilibria equation with Neumann boundary conditions by definition of the shooting manifolds. Conversely, any equilibrium of \eqref{PDE} satisfying Neumann boundary must be in $M_{\pi}\cap L_{\pi}$. Due to the uniqueness of the differential equation \eqref{shootflow}, such correspondence above is one-to-one. The implicit function theorem guarantees these solutions solve \eqref{eqeq}.

To prove the second part, consider an equilibrium $u_*$ corresponding to the boundary value $u_*(0)=a\in\mathbb{R}$. We compare the eigenvalue problem for $u_*$ and the differential equation satisfied by the angle of the tangent vectors of the shooting manifold. 

The eigenvalue problem for $u_*$ is the linearization of the right hand side of \eqref{diffIFT2} at $u_*$
\begin{equation}\label{EVprobPRE}
    \lambda u = \tilde{F}_q(x,u_*,p_*,q_*).u_{xx}+\tilde{F}_p(x,u_*,p_*,q_*).u_x+\tilde{F}_u(x,u_*,p_*,q_*).u,
\end{equation}
where $x\in[0,\pi]$ has Neumann boundary conditions. From now on, we suppress the arguments of $\tilde{F}_q,\tilde{F}_p,\tilde{F}_u$.
We rewrite equation \eqref{EVprobPRE} as a first order system:
\begin{subequations}\label{EVprob}
\begin{align}
    u'&=p,\\
    p'&=-(\tilde{F}_p \cdot p+ \tilde{F}_u \cdot u-\lambda u)/\tilde{F}_q,\\
    x'&=1,
\end{align}
\end{subequations}
with Neumann boundary condition. This is well defined due to parabolicity $\tilde{F}_q\neq 0$.

On the other hand, $M_x$ is parametrized by $a\in\mathbb{R}$ and its tangent vector corresponding to the solution $u_*$ given by $(\partial_a u(x,a),\partial_a p(x,a))$ satisfies the linearized equation 
\begin{subequations}\label{tangentshootquasi}
\begin{align}
    u_{a}'&= p_{a},\\
	p_{a}'&=F^0_p\cdot p_{a}+F^0_u \cdot u_{a},\\
	x'&=1,
\end{align}
\end{subequations}
with data $ (u_{a}(0),p_{a}(0))=(1,0)$. We suppress the arguments $(u(x,a),p(x,a))$ of $F^0_u,F^0_p$.

Note the tangent equation in \eqref{tangentshootquasi} is the same as the eigenvalue problem \eqref{EVprob} with $\lambda=0$, except the tangent equation only has Neumann boundary conditions at $x=0$. Indeed, implicit differentiation of \eqref{PDE} with respect to $u$ (resp. $p$), bearing \eqref{diffIFT} in mind, yields $F_u=-f_u/f_q$ (resp. $F_p=-f_p/f_q$). 
Similarly, $\tilde{F}_u=-f_u/f_r,\tilde{F}_p=-f_p/f_r$ and $\tilde{F}_q=-f_q/f_r$. Thus, $-\tilde{F}_u/\tilde{F}_q=F_u$ and $-\tilde{F}_p/\tilde{F}_q=F_p$, which implies \eqref{EVprob} becomes \eqref{tangentshootquasi}, noticing the argument $r_*=0$.
To finish the proof, we compare equations \eqref{EVprob} and \eqref{tangentshootquasi} using well-known arguments, as in \cite[Lemma 2.7]{Lappicy18}.
\end{proof}
Thus, given a shooting curve $M_\pi$, one can find all equilibria of the PDE \eqref{PDE}. Next we address the characterization of adjacency (i.e., the Morse indices and zero numbers) of equilibria by means of the shooting curve, similar to \cite{Rocha85,Hale99,Rocha91,FiedlerRocha21}.

The \emph{Fusco-Rocha permutation} $\sigma$ is obtained by firstly labeling the intersection points $e_j\in M_{\pi}\cap L_\pi$ along the shooting curve $M_{\pi}$ following its parametrization, given by $\{(\pi,u(\pi,a,0),p(\pi,a,0))\}$ as $a\in\mathbb{R}$ increases, according to:
\begin{equation}
    e_1 \underset{\quad\, M_\pi}{\prec} ... \underset{\quad\, M_\pi}{\prec} e_N
\end{equation} 
where $N$ denotes the number of equilibria. Secondly, label the intersection points $e_j\in M_{\pi}\cap L_\pi$ along $L_\pi$ as $b\in\mathbb{R}$ increases according to
\begin{equation}
    e_{\sigma(1)} \,\, < \,\, ... \,\, < \,\, e_{\sigma(N)}.
\end{equation} 
Therefore $\sigma\in S_N$, the permutation group of $N$ elements. The Fusco-Rocha permutation $\sigma$ is enough to guarantee all information regarding adjacency, as in \cite{Rocha91,FiedlerRocha96}. 
See also \cite{FiedlerRocha21} for a more recent viewpoint.
%
%
\begin{lem}\label{lem:adjbyshoot}
    \textbf{\emph{Adjacency Through Shooting.}}
    \begin{enumerate}
    \item  If the equilibrium $e_j\in\mathcal{E}$ 
    is hyperbolic, then its Morse index is given by
    \begin{equation}
        i(e_j)
        =1+\left\lfloor\frac{\theta(e_j)}{\pi}\right\rfloor,
    \end{equation}
    where $\theta(e_j)\in (-\pi/2,\infty)$ is the (clockwise) angle between the tangent of the curve $M_\pi$ and $L_\pi$ at their intersection $\{(\pi,e_j(\pi),0)\}$ and $\lfloor.\rfloor$ denotes the floor function.

    \item Consider $e_j,e_k\in \mathcal{E}$ with boundary value 
    $e_{j}(\pi)=b_j,e_{k}(\pi)=b_k$. Denote the vertical line at $b_j$ by $\ell_j:=\{(\pi,b_j,c) \textbf{ $|$ }  c\in\mathbb{R}\}$, and by $r_{jk}$ with $1\leq j < k\leq N$ the total number of intersections of $M_\pi$ and $\ell_j$ as $b$ ranges from $b_j$ to $b_k$, taking into account the sign of the rotation of the tangent of $M_\pi$ at said intersection (i.e., $+1$ in case of an intersection with a clockwise rotation and $-1$ for counter-clockwise). Then the number of intersection points of $e_j$ and $e_k$ can be computed as
    \begin{equation}\label{zeronumbers}
    z(e_j-e_k)=
    \begin{cases}
        i(e_j)+r_{jk}, \qquad\,\,\, \text{ if $(\theta_j \text{ mod } 2\pi)\in \left(\frac{\pi}{2},\pi\right) \cup \left(\frac{3\pi}{2},2\pi\right)$,}\\ 
        i(e_j)-1+r_{jk}, \quad \text{ if $(\theta_j \text{ mod } 2\pi)\in \left(0,\frac{\pi}{2}\right)\cup \left(\pi,\frac{3\pi}{2}\right)$}. 
    \end{cases}
    \end{equation}
    \end{enumerate}
\end{lem}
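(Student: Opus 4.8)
The plan is to reduce everything to the classical semilinear shooting analysis of Rocha and collaborators by exploiting the identities derived already in the proof of Lemma \ref{lem:eqbyshoot}. Recall that the tangent vector $(u_a,p_a)$ along $M_x$ satisfies \eqref{tangentshootquasi}, which coincides with the eigenvalue problem \eqref{EVprob} at $\lambda=0$. The first step is therefore to introduce the angle coordinate: write the tangent vector of $M_x$ in polar form $(u_a,p_a)=\rho(x)(\cos\varphi(x),\sin\varphi(x))$ and derive the scalar Prüfer-type ODE for $\varphi(x)$ from \eqref{tangentshootquasi}, namely $\varphi' = \sin^2\varphi + (F^0_p\cos\varphi + F^0_u\sin\varphi)\cos\varphi$ (up to the usual normalization; one may rescale $p$ by $\sqrt{\tilde F_q}$ if one wants the clean $\varphi'=\cos^2\varphi+\dots$ form, but the qualitative count is unchanged). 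The key structural fact — exactly as in the semilinear case — is that $\varphi$ is \emph{strictly increasing through every odd multiple of $\pi/2$}: when $\sin\varphi=\pm 1$ we get $\varphi'=1>0$. Hence $\varphi(x)$ can cross each half-line $\mathbb{R}_{<0}\cdot e_1$ direction only transversally and in one direction, so the winding of the tangent vector is monotone in the relevant sense.

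For part (1), I would compare the Prüfer angle $\varphi(x)$ of the shooting tangent with the Prüfer angle of the genuine eigenvalue problem \eqref{EVprob} at various $\lambda$. By Sturm--Liouville oscillation theory applied to \eqref{EVprob}, the number of positive eigenvalues of the linearization at $e_j$ — i.e. the Morse index $i(e_j)$ — equals the number of times the eigenfunction-angle, started at the Neumann condition $\varphi(0)=0$ (i.e. $p=0$, $u>0$), winds past $L_\pi$ (the direction $p=0$) as $\lambda$ decreases from $+\infty$; equivalently it equals the count of $x\in(0,\pi]$ at which the $\lambda=0$ solution $(u_a,p_a)$ is vertical, plus a correction depending on whether the terminal angle has already passed $L_\pi$. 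Since $(u_a,p_a)(0)=(1,0)$ means $\varphi(0)=0$, and since $\theta(e_j)$ is defined as the clockwise angle from the tangent of $M_\pi$ to $L_\pi$, one has $\varphi(\pi) = -\theta(e_j)$ modulo the orientation convention, and the number of half-turns accumulated is $\lfloor \theta(e_j)/\pi \rfloor$; the additive $1$ accounts for the initial Neumann alignment contributing the bottom of the spectrum. This is the standard computation; I would cite \cite[Lemma 2.7]{Lappicy18} and \cite{Rocha85,Rocha91,FiedlerRocha21} and spell out only the parabolicity-dependent signs.

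For part (2), the zero number $z(e_j-e_k)$ counts the intersections of the two equilibrium graphs. The standard device is to regard $v:=e_j-e_k$ as a solution of the linear equation obtained by subtracting the two equilibrium ODEs — an equation of the form $v'' = (\text{coeff})v' + (\text{coeff})v$ with coefficients given by integrated derivatives of $F^0$ along the segment between $e_j$ and $e_k$, exactly as in \eqref{linPDE} but in the spatial variable — and then apply a Sturm comparison/oscillation count. Concretely, as the boundary value $b$ sweeps from $b_j$ to $b_k$ along $L_\pi$, the shooting curve $M_\pi$ sweeps out a family of trajectories, and each time $M_\pi$ crosses the vertical line $\ell_j$ the function $e_j-e_{b}$ picks up or loses a zero according to the direction of crossing — this is the geometric meaning of the signed count $r_{jk}$. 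The base of the count, when $b$ is infinitesimally close to $b_j$, is governed by the local rotation of $M_\pi$ at $e_j$, i.e. by $\theta_j \bmod 2\pi$: in the two angular ranges where the tangent points "away from" $L_\pi$ one gets $i(e_j)$ zeros at the start, and in the complementary ranges one gets $i(e_j)-1$; adding $r_{jk}$ gives \eqref{zeronumbers}. I would formalize this by an induction on the number of equilibria between $e_j$ and $e_k$, as in \cite{Rocha91,FiedlerRocha96,FiedlerRocha21}, checking at each step that the only input used about $F^0$ is that \eqref{shootflow} has global solutions and that parabolicity $f_q f_r<0$ makes all the implicitly-defined coefficients well-defined and of the correct sign.

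The main obstacle is \emph{not} any genuinely new dynamics but the bookkeeping of orientation/sign conventions: the passage $f \leadsto F \leadsto F^0$ introduces several sign flips ($F_u = -f_u/f_q$, $\tilde F_q = -f_q/f_r$, etc.), and one must verify that under the parabolicity hypothesis \eqref{par} these conspire so that the shooting-tangent Prüfer angle is monotone through the verticals in the \emph{same} sense as in the semilinear model — otherwise the floor functions and the case split in \eqref{zeronumbers} would come out reversed. Once that sign audit is done, every remaining step is a citation to the semilinear shooting literature with $f$ replaced by $F^0$.
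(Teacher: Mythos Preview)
Your proposal is correct and matches the paper's approach exactly: the paper's proof of this lemma is a one-sentence citation to \cite[Lemma~2.7]{Lappicy18} and \cite[Proposition~3]{Rocha91}, noting that the fully nonlinear shooting equation has already been reduced to the semilinear form \eqref{shootflow} with linearization \eqref{tangentshootquasi} in Lemma~\ref{lem:eqbyshoot}. You have simply unpacked the Pr\"ufer-angle and Sturm-comparison content of those references and flagged the sign audit coming from the implicit differentiation $F_u=-f_u/f_q$, etc., which is precisely the only place the fully nonlinear setting requires care.
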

The proof of Lemma \ref{lem:adjbyshoot} follows Lemma 2.7 of \cite{Lappicy18} and Proposition 3 of \cite{Rocha91}, bearing in mind that the fully nonlinear shooting equation \eqref{eqeq} is reduced to \eqref{shootflow} with linearized equation related to the tangent of the shooting curve, as in Lemma \ref{lem:eqbyshoot}.
\begin{figure}[H]
\minipage{0.33\textwidth}\centering
\begin{tikzpicture}[scale=0.25]
    \draw[->] (-7,0) -- (7,0) node[right] {\footnotesize{$u$}};
    \draw[->] (0,-5) -- (0,5) node[above] {\footnotesize{$p$}};

    \filldraw [black] (-4.71,0) circle (5pt) node[anchor=north]{\footnotesize{\bm{$e_1$}}};
    \filldraw [black] (4.71,0) circle (5pt) node[anchor=north]{\footnotesize{\bm{$e_5$}}};
    \filldraw [black] (0,0) circle (5pt) node[anchor=north] {\footnotesize{\bm{$e_3$}}};
    \filldraw [black] (-1.57,0) circle (5pt) node[anchor=north]{\footnotesize{\bm{$e_2$}}};
    \filldraw [black] (1.57,0) circle (5pt) node[anchor=north]{\footnotesize{\bm{$e_4$}}};

    \filldraw [black] (-4.71,0) circle (5pt) node[anchor=south]{\footnotesize{\bm{$e_1$}}};
    \filldraw [black] (4.71,0) circle (5pt) node[anchor=south]{\footnotesize{\bm{$e_5$}}};
    \filldraw [black] (0,0) circle (5pt) node[anchor=south]{\footnotesize{\bm{$e_3$}}};
    \filldraw [black] (-1.57,0) circle (5pt) node[anchor=south]{\footnotesize{\bm{$e_4$}}};
    \filldraw [black] (1.57,0) circle (5pt) node[anchor=south]{\footnotesize{\bm{$e_2$}}};
    
    \draw [domain=0:4.71,variable=\t,smooth] plot ({\t*sin(\t r)},{-\t*cos(\t r)}); 
    \draw [domain=-5.2:-4.71,variable=\t,smooth] plot ({\t},{-(0.7)*(\t-4.71)*(\t+4.71)}); 
    \draw [domain=0:4.71,variable=\t,smooth] plot ({-\t*sin(\t r)},{\t*cos(\t r)}); 
    \draw [domain=4.71:5.2,variable=\t,smooth] plot ({\t},{(0.7)*(\t-4.71)*(\t+4.71)})node[anchor=south west]{\footnotesize{$M_\pi$}}; 
\end{tikzpicture}
\endminipage\hfill 
\minipage{0.33\textwidth}\centering
\begin{tikzpicture}[scale=0.25]
    \draw[->] (-7,0) -- (7,0) node[right] {\footnotesize{$u$}};
    \draw[->] (0,-5) -- (0,5) node[above] {\footnotesize{$p$}};

    \filldraw [black] (-4.71,0) circle (5pt) node[anchor=south]{\footnotesize{\bm{$e_1$}}};
    \filldraw [black] (4.71,0) circle (5pt) node[anchor=south]{\footnotesize{\bm{$e_5$}}};
    \filldraw [black] (0,0) circle (5pt) node[anchor=south]{\footnotesize{\bm{$e_3$}}};
    \filldraw [black] (-1.57,0) circle (5pt) node[anchor=south]{\footnotesize{\bm{$e_4$}}};
    \filldraw [black] (1.57,0) circle (5pt) node[anchor=south]{\footnotesize{\bm{$e_2$}}};

    
    \draw [domain=0:4.71,variable=\t,smooth] plot ({\t*sin(\t r)},{-\t*cos(\t r)}); 
    \draw [domain=-5.2:-4.71,variable=\t,smooth] plot ({\t},{-(0.7)*(\t-4.71)*(\t+4.71)}); 
    \draw [domain=0:4.71,variable=\t,smooth] plot ({-\t*sin(\t r)},{\t*cos(\t r)}); 
    \draw [domain=4.71:5.2,variable=\t,smooth] plot ({\t},{(0.7)*(\t-4.71)*(\t+4.71)}) node[anchor=south west]{\footnotesize{$M_\pi$}}; 
    
    \draw[->, shift={(-4.75,0)}] (0,0) -- (1,2.6) node[above] {\footnotesize{$\theta(e_1)$}};
    \draw[->, shift={(1.65,0)},rotate=180] (0,0) -- (1,1.2) node[below] {\footnotesize{$\theta(e_2)$}};
\end{tikzpicture}
\endminipage\hfill 
\minipage{0.33\textwidth}\centering
\begin{tikzpicture}[scale=0.25]
    \draw[->] (-7,0) -- (7,0) node[right] {\footnotesize{$u$}};
    \draw[->] (0,-5) -- (0,5) node[above] {\footnotesize{$p$}};

    \filldraw [black] (-4.71,0) circle (5pt);
    \filldraw [black] (4.71,0) circle (5pt) node[anchor=north west]{\footnotesize{$b_k$}};
    \filldraw [black] (0,0) circle (5pt);
    \filldraw [black] (-1.57,0) circle (5pt);
    \filldraw [black] (1.57,0) circle (5pt) node[anchor=north west]{\footnotesize{$b_j$}};
    
    \draw [domain=0:4.71,variable=\t,smooth] plot ({\t*sin(\t r)},{-\t*cos(\t r)}); 
    \draw [domain=-5.2:-4.71,variable=\t,smooth] plot ({\t},{-(0.7)*(\t-4.71)*(\t+4.71)}); 
    \draw [domain=4.71:5.2,variable=\t,smooth] plot ({\t},{(0.7)*(\t-4.71)*(\t+4.71)})node[anchor=south west]{\footnotesize{$M_\pi$}}; 

    \draw [very thick, domain=0:4.71,variable=\t,smooth] plot ({-\t*sin(\t r)},{\t*cos(\t r)}); 
    \draw [very thick,domain=0:1.6,variable=\t,smooth] plot ({\t*sin(\t r)},{-\t*cos(\t r)}); 
    
    \draw[dashed] (1.57,-5) -- (1.57,5) node[above] {\footnotesize{$\ell_j$}};
    
    \filldraw [black] (1.57,-3.25) circle (5pt);    
\end{tikzpicture}\vspace*{0.1cm}
\endminipage
\caption{On the left, the Fusco-Rocha permutation is obtained by consecutively labeling the equilibria along $M_\pi$ (top) and along $L_\pi$ (bottom), which yields the permutation $\sigma=(24)\in S_5$. In the middle, the angle $\theta (e_1)\in (-\pi/2,0)$ and thus the Morse index $i(e_1)=0$, whereas $i(e_j)$ increases by $+1$ (resp. -1) for each clockwise (resp. counter clockwise) $\pi$-rotation of $\theta(e_1)$; thereby $i(e_1)=0,i(e_2)=1,i(e_3)=2,i(e_4)=1,i(e_5)=0$. On the right, the number of (clockwise) intersections $r_{jk}=-1$ of the vertical line $\ell_j$ with the (bold) segment of $M_\pi$ ranging from $b_j$ to $b_k$. } \label{FIGX2}
\end{figure}
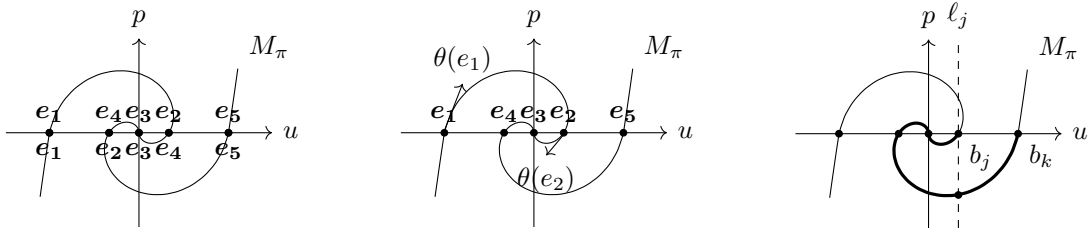

\section{Example: Fully nonlinear Chafee-Infante}\label{sec:CIfully}

In this section we provide the classical example of the Chafee-Infante global attractor that arises from the following fully nonlinear parabolic equation, 
\begin{equation}\label{PDECIfully}
    0=1-e^{u_t}+ u_{xx}+\lambda u[1-u^2]
\end{equation}
where $x\in[0,\pi]$ has Neumann boundary conditions and initial data $u_0\in C^{2\alpha+\beta}([0,\pi])$ with $\alpha,\beta \in (0,1)$ such that $\alpha \leq \text{min}\{1/2, 2\beta+1\}$. 
For the semilinear version see \cite{Hale99}.

The equilibria equation describing the shooting curve is equal to the semilinear one,
\begin{subequations}\label{shootCIfully}
\begin{align}
    u'&=p,\\
    p'&=-\lambda u[1-u^2],\\
    x'&=1.
\end{align}
\end{subequations}
This yields the Chafee-Infante shooting curves as in Figure \ref{FIGshoot}. See \cite{FiedlerRocha96,LappicySing,FiedlerRocha21} and references therein.
Thus all equilibria, their respective Morse indices and zero numbers can be computed according to Lemma \ref{lem:adjbyshoot}. 
As an illustrative example, we compute the indices and intersections for the first parameter values below.
This detects which equilibria are adjacent and thereby which connections occur by means of a heteroclinic orbit yielding the same structure (connection-wise) as the standard Chafee-Infante problem. See Figure \ref{FIGCOR}.
\begin{figure}[H]
\minipage[b]{0.33\textwidth}\centering
\begin{subfigure}\centering
\begin{tikzpicture}[scale=1.2]
    \draw[->] (-1.5,0) -- (1.5,0) node[right] {$u$};
    \draw[->] (0,-1) -- (0,1) node[above] {$p$};
    
    \filldraw [black] (-1,0) circle (1pt)  node[anchor=south]{\footnotesize{\bm{$e_1$}}};
    \filldraw [black] (1,0) circle (1pt)  node[anchor=south]{\footnotesize{\bm{$e_3$}}};
    \filldraw [black] (0,0) circle (1pt)  node[anchor=south]{\footnotesize{\bm{$e_2$}}};

    \draw [domain=-1.2:1.2,variable=\t,smooth] plot ({\t},{\t*(\t-1)*(\t+1)});
    \filldraw [black] (1.2,0.7) circle (0.01pt) node[anchor= west]{$M_{\pi}$};     
\end{tikzpicture}\vspace*{0.075cm}
    \addtocounter{subfigure}{-1}\captionof{subfigure}{\footnotesize{$\lambda\in (\lambda_0,\lambda_1)$.}}\label{FIG:shootdim1}
\end{subfigure}
\endminipage\hfill
\minipage[b]{0.33\textwidth}\centering
\begin{subfigure}\centering
\begin{tikzpicture}[scale=0.25]
    \draw[->] (-7.5,0) -- (7.5,0) node[right] {$u$};
    \draw[->] (0,-5.1) -- (0,5.1) node[above] {$p$};
    
    \filldraw [black] (-4.71,0) circle (5pt) node[anchor=south]{\footnotesize{\bm{$e_1$}}};
    \filldraw [black] (4.71,0) circle (5pt) node[anchor=south]{\footnotesize{\bm{$e_5$}}};
    \filldraw [black] (0,0) circle (5pt) node[anchor=south]{\footnotesize{\bm{$e_3$}}};
    \filldraw [black] (-1.57,0) circle (5pt) node[anchor=south]{\footnotesize{\bm{$e_4$}}};
    \filldraw [black] (1.57,0) circle (5pt) node[anchor=south]{\footnotesize{\bm{$e_2$}}};
    
    \filldraw [black] (-4.71,0) circle (5pt);
    \filldraw [black] (4.71,0) circle (5pt);
    \filldraw [black] (0,0) circle (5pt);
    \filldraw [black] (-1.57,0) circle (5pt);
    \filldraw [black] (1.57,0) circle (5pt);
    
    \draw [domain=0:4.71,variable=\t,smooth] plot ({\t*sin(\t r)},{-\t*cos(\t r)}); 
    \draw [domain=-5.2:-4.71,variable=\t,smooth] plot ({\t},{-(0.7)*(\t-4.71)*(\t+4.71)}); 
    \draw [domain=0:4.71,variable=\t,smooth] plot ({-\t*sin(\t r)},{\t*cos(\t r)}); 
    \draw [domain=4.71:5.2,variable=\t,smooth] plot ({\t},{(0.7)*(\t-4.71)*(\t+4.71)}) node[anchor= west]{$M_{\pi}$}; 
    \end{tikzpicture}
    \addtocounter{subfigure}{-1}\captionof{subfigure}{\footnotesize{$\lambda\in (\lambda_1,\lambda_2)$.}}\label{FIG:shootdim2}
\end{subfigure}
\endminipage\hfill
\minipage[b]{0.33\textwidth}\centering
\begin{subfigure}\centering
\begin{tikzpicture}[scale=0.155]
    \draw[->] (-10.5,0) -- (10.5,0) node[right] {$u$};
    \draw[->] (0,-8.5) -- (0,8.5) node[above] {$p$};
    
    \filldraw [black] (-7.85,0) circle (8pt) node[anchor=south]{\footnotesize{\bm{$e_1$}}};
    \filldraw [black] (7.85,0) circle (8pt) node[anchor=south]{\footnotesize{\bm{$e_7$}}};
    \filldraw [black] (-4.71,0) circle (8pt) node[anchor=south]{\footnotesize{\bm{$e_6$}}};
    \filldraw [black] (4.71,0) circle (8pt) node[anchor=south]{\footnotesize{\bm{$e_2$}}};
    \filldraw [black] (0,0) circle (8pt) node[anchor=north]{\footnotesize{\bm{$e_4$}}};
    \filldraw [black] (-1.57,0) circle (8pt) node[anchor=south]{\footnotesize{\bm{$e_3$}}};
    \filldraw [black] (1.57,0) circle (8pt) node[anchor=south]{\footnotesize{\bm{$e_5$}}};
        
    \draw [domain=0:7.85,variable=\t,smooth] plot ({\t*sin(\t r)},{-\t*cos(\t r)}); 
    \draw [shift={(-3.14,0)},domain=-5.6:-4.71,variable=\t,smooth] plot ({\t},{-(0.7)*(\t-4.71)*(\t+4.71)}); 
    \draw [domain=0:7.85,variable=\t,smooth] plot ({-\t*sin(\t r)},{\t*cos(\t r)}); 
    \draw [shift={(3.14,0)},domain=4.71:5.6,variable=\t,smooth] plot ({\t},{(0.7)*(\t-4.71)*(\t+4.71)}) node[anchor= west]{$M_{\pi}$}; 
    \end{tikzpicture}\vspace*{-0.05cm}
    \addtocounter{subfigure}{-1}\captionof{subfigure}{\footnotesize{$\lambda\in (\lambda_2,\lambda_3)$.}}\label{FIG:shootdim3}
\end{subfigure}
\endminipage
\caption{From left to right: the shooting curves for $\lambda$ in $(\lambda_0, \lambda_1)$, $(\lambda_1,\lambda_2)$, and $(\lambda_2,\lambda_3)$, respectively.} \label{FIGshoot}
\end{figure}
For $\lambda\in (\lambda_0,\lambda_1)$, the rotation of the tangent of $e_0$ yields $i(e_1)=0,i(e_2)=1,i(e_3)=0$. Moreover, the intersections of vertical lines $\ell_j$ at each equilibria $e_j$ with the shooting curves yield $r_{12}=r_{13}=r_{23}=0$, and the angles of the tangent vector satisfy $(\theta(e_1),\theta(e_3) \text{ mod } 2\pi) \in (\pi,3\pi/2)$, whereas $(\theta(e_2) \text{ mod } 2\pi)\in (0,\pi/2)$, we consequently obtain $z(e_1-e_2)=z(e_1-e_3)=z(e_2-e_3)=0$. Therefore $e_2$ is adjacent to both $e_1$ and $e_0$, since there are no equilibria between $e_2$ and either $e_1$ or $e_3$.

For $\lambda\in (\lambda_1,\lambda_2)$, then $i(e_1)=i(e_5)=0,i(e_2)=i(e_4)=1,i(e_3)=2$. Moreover, the $r_{1j}=0$ for all $j=2,...,5$, $r_{23}=r_{24}=0, r_{25}=-1$,  $r_{34}=0,r_{35}=-1$,  $r_{45}=-1$. Also $(\theta(e_1),\theta(e_5) \text{ mod } 2\pi) \in (\pi,3\pi/2)$, $(\theta(e_2),\theta(e_4) \text{ mod } 2\pi) \in (\pi/2,\pi)$ and $(\theta(e_3)\text{ mod } 2\pi) \in (\pi,3\pi/2)$.
Consequently,
\begin{subequations}\label{znumbCI}
\begin{align}
    z(e_1-e_j)&=0, \text{ for all $j=2,...,5$,} \\ 
    z(e_2-e_3)=z(e_2-e_4)&=1,\quad z(e_2-e_5)=0,\\
    z(e_3-e_4)&=1,\quad z(e_3-e_5)=0,\\
    z(e_4-e_5)&=0.
\end{align}
\end{subequations}
We only compute the heteroclinic connections between equilibria with Morse index differing by one, as the remaining connections follow by transitivity.
Indeed, $e_3$ is adjacent to $e_2,e_4$, as there are no equilibria between $e_3$ and either $e_2$ or $e_4$ at $x=0$; and $e_2,e_4$ are adjacent to $e_1,e_5$ since again there are no equilibria between each pair of equilibria. By transitivity, there are connections from $e_3$ to $e_1,e_5$.

Alternatively, one could compute adjacency among pairs of equilibria according to its definition \eqref{adjintro} and directly find heteroclinics between equilibria that do not necessarily have Morse indices differing by one. Indeed, $e_3$ is adjacent to $e_1,e_5$, since the only equilibrium between $e_1$ and $e_3$ at $x=0$ is $e_2$ (resp. the only equilibrium between $e_3$ and $e_5$ at $x=0$ is $e_4$), yet $z(e_1-e_3)=z(e_1-e_2)=0$ whereas $z(e_2-e_3)=1$. 

For $\lambda\in (\lambda_2,\lambda_3)$, then $i(e_1)=i(e_7)=0,i(e_2)=i(e_6)=1,i(e_3)=i(e_5)=2,i(e_4)=3$.  Moreover, the $r_{1j}=0$ for all $j=2,...,7$, $r_{23}=r_{24}=r_{25}=r_{26}=0, r_{27}=-1$,  $r_{34}=r_{35}=0,r_{36}=-1,r_{37}=-2$,  $r_{45}=0,r_{46}=-1,r_{47}=-2$, $r_{56}=-1, r_{57}=-2$, $r_{67}=-1$. Also $(\theta(e_1),\theta(e_3),\theta(e_5),\theta(e_7) \text{ mod } 2\pi) \in (\pi,3\pi/2)$, $(\theta(e_2),\theta(e_6) \text{ mod } 2\pi) \in (\pi/2,\pi)$ and $(\theta(e_4)\text{ mod } 2\pi) \in (0,\pi/2)$.
Thus,
\begin{subequations}\label{znumbCI2}
\begin{align}
    z(e_1-e_j)&=0, \text{ for all $j=2,...,7$,} \\ 
    z(e_2-e_j)&=1, \text{ for all $j=3,...,6$,}\qquad \, z(e_2-e_7)=0,\\
    z(e_3-e_4)=z(e_3-e_5)&=2,\quad\,\,\,\, z(e_3-e_6)=1, \qquad z(e_3-e_7)=0,\\
    z(e_4-e_5)&=2, \quad \,\,\,\, z(e_4-e_6)=1, \qquad z(e_4-e_7)=0,\\
    z(e_5-e_6)&=1, \quad \,\,\,\, z(e_5-e_7)=0,\\
    z(e_6-e_7)&=0.
\end{align}
\end{subequations}
This enables finding which equilibria are adjacent by analysing the equilibria $e_*$ that lie between $e_j$ and $e_k$ at $x=0$. 
We only compute the heteroclinic connections between equilibria with Morse index differing by one, as the remaining connections follow by transitivity.
Indeed, $e_4$ is adjacent to $e_3,e_5$, as there are no equilibria between $e_4$ and either $e_3$ or $e_5$ at $x=0$. Also, $e_3$ is adjacent to $e_2$, since there is no equilibria between $e_3$ and $e_2$ at $x=0$, and $e_3$ is adjacent to $e_6$ because the equilibria between them at $x=0$ are $e_4,e_5$, but $z(e_3-e_6)=1$ whereas $z(e_3-e_4)=z(e_3-e_5)=2$. Similarly, $e_5$ is adjacent to $e_2,e_6$. Analogously, $e_2$ is adjacent to $e_1$ since there is no equilibria between them at $x=0$, and $e_2$ is adjacent to $e_7$ since the equilibria between them at $x=0$ are $e_3,e_4,e_5,e_6$, but $z(e_2-e_7)=0$ whereas $z(e_2-e_j)=1$ for all $j=3,...,6$.
By similar reasoning, $e_6$ is adjacent to $e_1,e_7$.
\begin{figure}[H]
\minipage[b]{0.33\textwidth}\centering
\begin{subfigure}\centering
    \begin{tikzpicture}[scale=0.75]
    \draw[-] (-2,0) -- (2,0);
    \draw[->] (-1,0) -- (-1.01,0);
    \draw[->] (1,0) -- (1.01,0);
    
    \filldraw [thick] (-2,0) circle (2pt) node[left] {\footnotesize{$e_1$}};
    \filldraw [thick] (2,0) circle (2pt) node[right] {\footnotesize{$e_3$}};
    \filldraw [thick] (0,0) circle (2pt) node[above] {\footnotesize{$e_2$}};

    \filldraw [white,rotate=90,thick] (-2,0) circle (2pt) node[below] {\footnotesize{$-\Phi^\infty_1$}};
    \filldraw [white,rotate=90,thick] (2,0) circle (2pt) node[above] {\footnotesize{$+\Phi^\infty_1$}};   

    \end{tikzpicture}
    \addtocounter{subfigure}{-1}\captionof{subfigure}{\footnotesize{$\lambda\in (\lambda_0,\lambda_1)$.}}\label{FIG:dim1}
\end{subfigure}
\endminipage\hfill
\minipage[b]{0.33\textwidth}\centering

\begin{subfigure}\centering
    \begin{tikzpicture}[scale=0.75]
    \draw[-] (-2,0) -- (2,0);
    \draw[->] (-1,0) -- (-1.01,0);
    \draw[->] (1,0) -- (1.01,0);
    
    \draw[rotate=90, -] (-2,0) -- (2,0);
    \draw[rotate=90, ->] (-1,0) -- (-1.01,0);
    \draw[rotate=90, ->] (1,0) -- (1.01,0);
    
    \filldraw [rotate=90,thick] (-2,0) circle (2pt) node[below] {\footnotesize{$e_2$}};
    \filldraw [rotate=90,thick] (2,0) circle (2pt) node[above] {\footnotesize{$e_4$}};    
    
    \draw (0,0) circle (57pt);

    \draw[<-] (1.2,1.6) arc (44:45:0.4cm and 0.4cm);    
    \draw[rotate=180,<-] (1.2,1.6) arc (44:45:0.4cm and 0.4cm);    

    \draw[rotate=70,->] (1.2,1.6) arc (44:45:0.4cm and 0.4cm);    
    \draw[rotate=70+180,->] (1.2,1.6) arc (44:45:0.4cm and 0.4cm);    

    \draw [domain=0:6.28,variable=\t,smooth] plot ({-2+0.64*\t},{0.9*(sin(\t r))});
    \draw[->] (-1,0.9) -- (-1.01,0.9);
    \draw[->] (1,-0.9) -- (1.01,-0.9);
    
    \draw [domain=0:6.28,variable=\t,smooth] plot ({-2+0.64*\t},{-0.9*(sin(\t r))});
    \draw[->] (-1,-0.9) -- (-1.01,-0.9);
    \draw[->] (1,0.9) -- (1.01,0.9);
    
    \filldraw [thick] (-2,0) circle (2pt) node[left] {\footnotesize{$e_1$}};
    \filldraw [thick] (2,0) circle (2pt) node[right] {\footnotesize{$e_5$}};
    \filldraw [thick] (0,0) circle (2pt) node[above] {\footnotesize{$e_3$}};

    \end{tikzpicture}
    \addtocounter{subfigure}{-1}\captionof{subfigure}{\footnotesize{$\lambda\in (\lambda_1,\lambda_2)$.}}\label{FIG:dim2}
\end{subfigure}
\endminipage\hfill
\minipage[b]{0.33\textwidth}\centering
\begin{subfigure}\centering
    \begin{tikzpicture}[scale=0.75]
    \draw[dashed,-] (-2,0) -- (2,0);
    \draw[dashed,->] (-1,0) -- (-1.01,0);
    \draw[dashed,->] (1,0) -- (1.01,0);
    
    \draw[rotate=90, dashed,-] (-2,0) -- (2,0);
    \draw[rotate=90, dashed,->] (-1,0) -- (-1.01,0);
    \draw[rotate=90, dashed,->] (1,0) -- (1.01,0);

    \draw[dashed,-] (-0.4,-0.4) -- (0.4,0.4);
    \draw[->] (0.3,0.3) -- (0.31,0.31);
    \draw[rotate=180,->] (0.3,0.3) -- (0.31,0.31);
    
    \filldraw [rotate=90,thick] (-2,0) circle (2pt) node[below] {\footnotesize{$e_2$}};
    \filldraw [rotate=90,thick] (2,0) circle (2pt) node[above] {\footnotesize{$e_6$}};     \filldraw [thick] (-2,0) circle (2pt) node[left] {\footnotesize{$e_1$}};
    \filldraw [thick] (2,0) circle (2pt) node[right] {\footnotesize{$e_7$}};
    \filldraw [thick] (0,0) circle (2pt) node[above] {\footnotesize{$e_4$}}; 
    \filldraw [thick] (-0.4,-0.4) circle (2pt) node[anchor=north east] {\footnotesize{$e_3$}};
    \filldraw [thick] (0.4,0.4) circle (2pt) node[anchor=south west] {\footnotesize{$e_5$}};        
    
    \draw (0,0) circle (57pt);

    \draw[<-] (1.2,1.6) arc (44:45:0.4cm and 0.4cm);    
    \draw[rotate=180,<-] (1.2,1.6) arc (44:45:0.4cm and 0.4cm);    

    \draw[rotate=70,->] (1.2,1.6) arc (44:45:0.4cm and 0.4cm);    
    \draw[rotate=70+180,->] (1.2,1.6) arc (44:45:0.4cm and 0.4cm);       

    \draw (-2,0) arc (180:360:2cm and 0.4cm);
    \draw[dashed] (-2,0) arc (180:0:2cm and 0.4cm);

    \draw[rotate=-90] (-2,0) arc (180:360:2cm and 0.4cm);
    \draw[rotate=-90,dashed] (-2,0) arc (180:0:2cm and 0.4cm);
    
    \draw[->] (0.34,1) arc (44:45:0.09cm and 0.4cm);    
    \draw[rotate=90,->] (0.34,1) arc (44:45:0.09cm and 0.4cm);    
    \draw[rotate=180,->] (0.34,1) arc (44:45:0.09cm and 0.4cm);    
    \draw[rotate=270,->] (0.34,1) arc (44:45:0.09cm and 0.4cm);    

    \draw[->] (-0.34,1) arc (131:130:0.09cm and 0.4cm);    
    \draw[rotate=90,->] (-0.34,1) arc (131:130:0.09cm and 0.4cm);    
    \draw[rotate=180,->] (-0.34,1) arc (131:130:0.09cm and 0.4cm);    
    \draw[rotate=270,->] (-0.34,1) arc (131:130:0.09cm and 0.4cm);    
    \end{tikzpicture}
    \addtocounter{subfigure}{-1}\captionof{subfigure}{\footnotesize{$\lambda\in (\lambda_2,\lambda_3)$.}} 
\end{subfigure}
\endminipage
\captionof{figure}{The Chafee-Infante attractor: dots correspond to equilibria and arrows to heteroclinics. 
}\label{FIGCOR}
\end{figure}
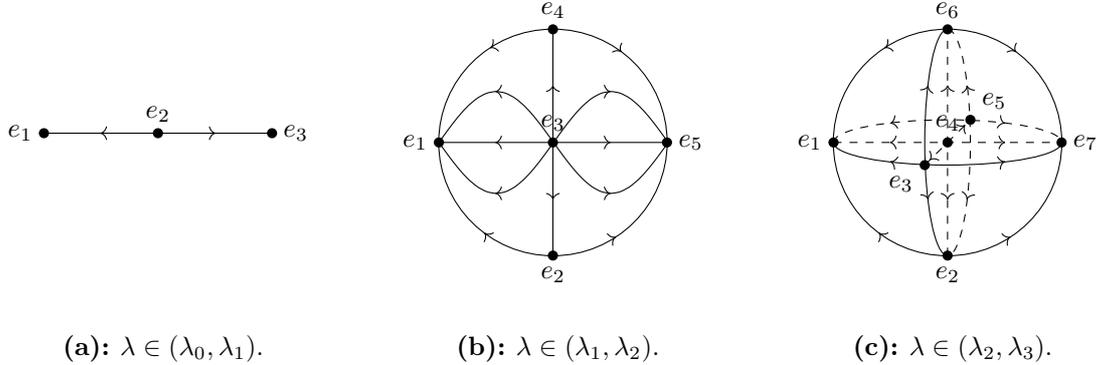

\section{Discussion}\label{sec:disc}

We now provide a discussion regarding the attractors of equation \eqref{PDE} towards a classification of PDEs by means of their global attractors. 
As a preamble, let us fix Neumann boundary conditions, since attractors do not depend on them as long as the conditions are separated, as it was proved for the semilinear case in  \cite{Fiedler96}. We conjecture that this result still holds for fully nonlinear equations, under suitable hypothesis.
Moreover, we bear in mind that different PDEs \eqref{PDE} with the same Fusco-Rocha permutation yields $C^0$-orbit equivalence attractors. See \cite{FiedlerRocha00} for a proof in the semilinear setting, where the authors conjecture that this fact remains valid for the fully nonlinear setting.
Throughout the discussion, we consider $f\in C^2$.
As a final note, we mention that the task of finding a nonlinearity $f$ modelling an equation \eqref{PDE} that realizes a given Fusco-Rocha permutation was proved in \cite{FiedlerRocha99}.

Denote by $Sturm(u)$ the category with objects given by the global attractors (up to orbit equivalence) of the quasilinear equation with Hamiltonian type nonlinearities,
\begin{equation}\label{Hamiltonian}
    u_t=a(u)u_{xx}+f(u),
\end{equation}
where $a>0,a\in C^2$ satisfies the conditions \eqref{diss} that guarantee dissipativity. See \cite{EilenbergMacLane} for a classical introduction on category theory.

There are several morphisms in the $Sturm(u)$ category. Two that come to mind are bifurcation morphisms, namely $\mathcal{A}\mapsto p(\mathcal{A})$ if $p(\mathcal{A})$ arises from $\mathcal{A}$ after a pitchfork bifurcation; or $\mathcal{A}\mapsto sn(\mathcal{A})$ if $sn(\mathcal{A})$ is a result of $\mathcal{A}$ after a saddle-node bifurcation. There are other morphisms that emerge from topological constructions that respect the Sturm structure, such as a suspension of a well-known Sturm attractors that, see \cite{FiedlerRochatryp3}.

In \cite{FiedlerRochaWolfrum12}, it is given a characterization of the Sturm attractors of Hamiltonian type by means of the Fusco-Rocha permutation, except that the class $Sturm(u)$ consisted only of the reaction terms $f(u)$ with semilinear diffusion $a\equiv 1$. Nevertheless, the Hamiltonian quasilinear equations from \eqref{Hamiltonian} computed in \cite{Lappicy18} can be realized by semilinear ones, if one considers $u_t=u_{xx}+f(u)/a(u)$. Indeed, they have the same shooting equation, hence same permutation. Thus they should have the same attractor (up to orbit equivalence).

Consider also the class $Sturm(u,u_{xx},u_t)$, whose objects consist of global attractors of the following fully nonlinear equations of Hamiltonian type:
\begin{equation}\label{Hamiltonianfully}
    0=f(u,u_{xx},u_t),
\end{equation}
with parabolicity condition \eqref{par} such that the fully nonlinear equation \eqref{Hamiltonianfully} can be rewritten globally as $F^1(u,u_t)u_t=u_{xx}+F^0(u)$, where $F^0,F^1$ arise from $f$ through the implicit function theorem, as in \eqref{F0}-\eqref{F1}, and satisfy the dissipativity condition \eqref{diss}. 
Note that the global attractors of \eqref{Hamiltonianfully} can be realized by semilinear equations \eqref{Hamiltonian}. 
Indeed,
$F^1(u,u_t)u_t=u_{xx}+F^0(u)$ has the same shooting equation as the usual semilinear equation $u_t=u_{xx}+F^0(u)$. Hence they possess the same Fusco-Rocha permutation and thereby should have the same attractor (up to orbit equivalence). Thus,
\begin{equation}\label{STURM0}
    Sturm(u)=Sturm(u,u_{xx},u_t).
\end{equation}
Similarly, let $Sturm(x,u,u_x)$ be the category of attractors of \eqref{Hamiltonian} with advection dependent diffusion coefficient $a(x,u,u_x)$ and reaction $f(x,u,u_x)$. Such quasilinear attractors and the fully nonlinear attractors of \eqref{PDE}, denoted by $Sturm(x,u,u_x,u_{xx},u_t)$, can be realized by semilinear equations.
Indeed, 
a semilinear equation with reaction $f/a$ has the same Fusco-Rocha permutation of a quasilinear equation with diffusion coefficient $a>0$. 
Likewise, 
any fully nonlinear equation \eqref{PDE} has a Fusco-Rocha permutation that can be realized by a semilinear equation with reaction term $F^0(x,u,u_x)$ emerging from the implicit function theorem in \eqref{F0}. Therefore,
\begin{equation}\label{STURM1}
    Sturm(x,u,u_x)=Sturm(x,u,u_x,u_{xx},u_t).
\end{equation}
It is known that there are Sturm attractors for advection dependent nonlinearities which can not be realized by Hamiltonian vector fields (see \cite{FiedlerRocha96,FiedlerRochaWolfrum12}), i.e.,
\begin{equation}\label{STURM2}
    Sturm(u) \subsetneq Sturm(x,u,u_x).
\end{equation}
Despite of all the nonlinear possibilities for $f$ in \eqref{PDE}, the dependencies on $u$ and $u_x$ play crucial a role in the complexity of the attractor. Even though there are no new attractors in the class of fully nonlinear parabolic equations, we enlarge the class of models that one is able to compute the Sturm attractors, and hence the domain of the Sturm and Fusco-Rocha functors, as the example in Section \ref{sec:CIfully}.
It remains the question of describing a full filtration diagram of classes of Sturm attractors such as in \eqref{STURM0}, \eqref{STURM1} and \eqref{STURM2}. For example, which Sturm attractors arise from the class of fully nonlinear diffusion in $Sturm(u_{xx},u_t)$? Where do the classes $Sturm(x), Sturm(x,u), Sturm(u_x)$ and $Sturm(u,u_{x})$ fit in such a diagram?
Can one give a permutation characterization to each of these classes, similar to Theorem 1 in \cite{FiedlerRochaWolfrum12} for Hamiltonian vector fields?

Next, denote by $S(u)$ the category of objects given by the Fusco-Rocha permutations within the group of permutations $S_n$, for all $n$, satisfying the conditions of Theorem 1 in \cite{FiedlerRochaWolfrum12} that classify the permutations of Hamiltonian vector fields. As before, there are morphisms in this category that also arise from bifurcations, such as $\sigma\mapsto p_*(\sigma)$ if $p_*(\sigma)$ is obtained from $\sigma$ after a pitchfork bifurcation and $\sigma\mapsto sn_*(\sigma)$ if $sn_*(\sigma)$ is achieved from $\sigma$ after a saddle-node bifurcation, and morphisms that arise from topological constructions such as a suspension. Note that all previous categories are graded by the dimension $n$ of the global attractor, for example $S(u)=S_n(u)\times \mathbb{N}_0$ where $S_n(u)$ are the Fusco-Rocha permutations in the group $S_n$ for Hamiltonian vector fields, or $Sturm(u)=Sturm_n(u)\times \mathbb{N}_0$ where $Sturm_n(u)$ are the Sturm attractors of dimension $n$.
Note that the bifurcation morphisms do not preserve the graded structure, as degree-2 maps, i.e., $p_*(\sigma)\in S_{n+2}$ for $\sigma \in S_n$.

The construction of Sturm attractors developed in the literature so far is a functor from the category of scalar parabolic equations of second order in one spatial variable, denoted by $parPDE(2,1)$, into the category $Sturm(x,u,u_x,u_{xx},u_t)$, and we call it the \emph{Sturm functor}, denoted by $Sturm: parPDE(2,1)\to Sturm(x,u,u_x,u_{xx},u_t)$. 
The Sturm functor is not injective with respect to the objects: two different parabolic PDEs can yield the same attractor (up to $C^0$-orbit equivalence) if they have the same Fusco-Rocha permutation.
The Sturm functor can be factorized by the functor from $parPDE(2,1)$ to its Fusco-Rocha permutation in $S(x,u,u_x,u_{xx},u_t)$, called \emph{Fusco-Rocha functor} which we denote by $\mathcal{F}\mathcal{R}:parPDE(2,1)\to S(x,u,u_x,u_{xx},u_t)$, and the functor $\mathcal{S}: S(x,u,u_x,u_{xx},u_t) \to Sturm(x,u,u_x,u_{xx},u_t)$ which constructs the attractor from a given permutation. In other words, $Sturm=\mathcal{F}\mathcal{R}\circ \mathcal{S}$.

The quest to explicitly describe and characterize the global attractors of partial differential equations 
 goes on.


\textbf{Acknowledgment.} 
I was supported by FAPESP, 17/07882-0 and 18/18703-1.

\medskip


\end{document}